\numberwithin{equation}{section}
\DeclarePairedDelimiter\floor{\lfloor}{\rfloor}
\theoremstyle{plain}
\newtheorem{theorem}{Theorem}[section]
\newtheorem{lemma}[theorem]{Lemma}
\newtheorem{corollary}[theorem]{Corollary}
\newtheorem{proposition}[theorem]{Proposition}
 \theoremstyle{definition}
\newtheorem{remark}[theorem]{Remark}
\DeclarePairedDelimiterX{\inp}[2]{\langle}{\rangle}{#1, #2}
\newcommand{\cV}{{\mathcal V}}
\newcommand{\rank}{{\mathrm{rank}}}
\newcommand{\dist}{{\mathrm{dist}}}
\newcommand{\ba}{\begin{eqnarray}}
\newcommand{\na}{\end{eqnarray}}
\newcommand{\ban}{\begin{eqnarray*}}
\newcommand{\nan}{\end{eqnarray*}}
\newcommand{\N}{{\mathbb N}}
\newcommand{\R}{{\mathbb R}}
\newcommand{\scrC}{{\mathscr C}}
\renewcommand{\thefootnote}{\fnsymbol{footnote}}
\g@addto@macro{\endabstract}{\@setabstract}
\newcommand{\authorfootnotes}{\renewcommand\thefootnote{\@fnsymbol\c@footnote}}%
\title[]{A Characterization of triangle-free \\ cyclic graphs with self-loops of rank 3}
\subjclass[2020]{05C50, 05C90, 05C92}
\keywords{Graphs with self-loops, triangle-free, characterization, rank}
\begin{document}

\begin{center}
    \vspace{-1cm}
	\maketitle
	
	\normalsize
    \authorfootnotes
    Johnny Lim
	\par \bigskip



        \small{School of Mathematical Sciences, Universiti Sains Malaysia, Penang, Malaysia}\par \bigskip

\end{center}

\address{School of Mathematical Sciences, Universiti Sains Malaysia, Penang, Malaysia
}
\email{johnny.lim@usm.my}

\begin{abstract}
Let $G_S$ be a self-loop graph as the graph obtained by attaching a self-loop at every vertex in $S \subseteq V(G)$ of a simple graph $G.$ If $G=C_n$ is the cycle graphs of order $n$ and $S \neq \emptyset,$ we show that there are no rank 3 self-loop graphs $(C_n)_S$ for $n\geq 5.$ As a consequence, we determine and construct all possible rank 3 triangle-free self-loop cyclic graph of order at least 4 from $(C_4)_S$ via graph join operations. This provides a partial solution to the characterization problem of rank 3 self-loop graphs.  
\end{abstract}

\section{Introduction}
Let $G$ be a simple (i.e., without self-loops and multiple edges) undirected graph of order $n=|V(G)|,$ where $V(G)=\{v_1,v_2,...,v_n\}$ is the set of vertices in $G.$ In this paper, we consider only \textit{connected} graphs, for which there exists a path between any two vertices. Let $A(G)=(a_{ij})_{n\times n}$ be the adjacency matrix of $G,$ whose the $(i,j)$-entry is defined by $a_{ij}=1$ if $v_i$ is adjacent to $v_j$ where $i \neq j$ and $a_{ij}=0$ otherwise. For a matrix $A \in \mathrm{Mat}_n(\R),$ the \textit{rank} of $A,$ $\rank(A),$ is the dimension of the column space of $A.$ Thus, the rank of a graph $G,$  $\rank(G),$ is defined to be $\rank(A(G)).$ 

Let $S \subseteq V(G)$ be a non-empty subset and $|S|=\sigma.$ Denote by $G_S$ the \textit{self-loop graph} of $G$ at $S,$ as a graph obtained from $G$ by attaching a self-loop at each vertex in $S.$  If $\sigma = 0,$ then $G_S=G.$ If $\sigma=n,$ we denote the graph with full-loops by $\widehat{G}.$ The adjacency matrix $A(G_S)=(a_{ij})_{n \times n}$ of $G_S$ is then defined by $a_{ij}=1$ if $v_i$ is adjacent to $v_j$ with possibly $i=j,$ and $a_{ij}=0$ otherwise. Thus, $A(G_S)=A(G) + I_\sigma,$ where $I_\sigma$ is the diagonal matrix with $\sigma$ many 1's on the diagonal and zero otherwise. The rank of $G_S$ is defined similarly by $\rank(G_S):= \rank(A(G_S)).$ Throughout this paper, we work primarily with the rank of graphs. In particular, it is classically known that:
\begin{proposition}\cite{HornJohnson2013}
	If $A$ is a submatrix of $B,$ then $\rank(A) \leq \rank(B).$
\end{proposition}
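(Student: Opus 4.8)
The plan is to prove the inequality by reducing to the deletion of a single line (row or column) at a time and then iterating. Write $B \in \Mat_{m \times n}(\R)$ and let $A = B[I \mid J]$ denote the submatrix of $B$ obtained by keeping the rows indexed by $I \subseteq \{1,\dots,m\}$ and the columns indexed by $J \subseteq \{1,\dots,n\}$. Since any such $A$ arises from $B$ by a finite sequence of single-row and single-column deletions, it suffices to show that deleting one row or one column cannot increase the rank; the general bound $\rank(A) \le \rank(B)$ then follows by induction on the number of deleted lines.

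For a column deletion, observe that the column space of the resulting matrix is spanned by a subset of the columns of $B$, hence is a linear subspace of the column space of $B$; comparing dimensions gives the step. For a row deletion, invoke the equality of row rank and column rank: the row space of the resulting matrix is spanned by a subset of the rows of $B$, so it is a subspace of the row space of $B$, and taking dimensions together with $\rank = \text{row rank} = \text{column rank}$ finishes the step.

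Alternatively, and perhaps most transparently, one can use the determinantal characterization of rank: $\rank(M)$ equals the largest $r$ for which $M$ possesses an $r \times r$ submatrix of nonzero determinant. Every square submatrix of $A$ is also a square submatrix of $B$, so the quantity defining $\rank(A)$ is a supremum over a subset of the quantities defining $\rank(B)$, which immediately yields $\rank(A) \le \rank(B)$.

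There is essentially no genuine obstacle in this statement; the only point meriting mild care is the convention that a \emph{submatrix} is allowed to be formed from an arbitrary (not necessarily contiguous) choice of rows and columns, which is precisely the setting in which both arguments above apply without change.
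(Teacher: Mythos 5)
Your proof is correct; both the line-deletion argument (using the equality of row rank and column rank for the row-deletion step) and the determinantal-rank argument are standard and complete. The paper itself offers no proof of this proposition, citing it directly to Horn and Johnson, so there is nothing to compare against; either of your two arguments would serve as a self-contained justification.
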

\noindent An immediate consequence is the following that will be used frequently henceforth: 
\begin{corollary}
	\label{subgraphrank}
	If $H$ is a subgraph of $G_S,$ then $\rank(H) \leq \rank(G_S).$
\end{corollary}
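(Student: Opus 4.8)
\section*{Proof proposal}

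The plan is to derive this as an essentially immediate consequence of the preceding Proposition on submatrices, once the notion of ``subgraph'' is pinned down correctly. First I would observe that $H$ must here be read as a \emph{vertex-induced} subgraph of $G_S$: that is, $H = (G_S)[U]$ for some $U \subseteq V(G)$, with the loops inherited (a loop sits at $v \in U$ precisely when $v \in S$). This is the only reading under which the inequality can hold in general, since deleting an edge or a loop may strictly \emph{increase} the rank---for instance, deleting a single edge from $C_4$, which has rank $2$, produces the path $P_4$, which has rank $4$---so edge-deletion and loop-deletion subgraphs must be excluded.

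Next I would exhibit $A(H)$ explicitly as a principal submatrix of $A(G_S)$. Writing $U = \{v_{i_1}, \dots, v_{i_k}\}$ with $i_1 < \cdots < i_k$, the $(p,q)$-entry of $A(H)$ equals $1$ precisely when $v_{i_p}$ and $v_{i_q}$ are adjacent in $H$ (allowing $p=q$, i.e.\ including loops), which by the definition of the induced subgraph happens precisely when $v_{i_p}$ and $v_{i_q}$ are adjacent in $G_S$, i.e.\ when the $(i_p,i_q)$-entry of $A(G_S)$ equals $1$. Hence $A(H)$ is exactly the submatrix of $A(G_S)$ obtained by retaining the rows and columns indexed by $i_1,\dots,i_k$; in the notation of the introduction, $A(H) = A(G[U]) + I_{|S\cap U|}$ sits as a principal block inside $A(G_S) = A(G) + I_\sigma$.

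Finally, applying the Proposition with $A = A(H)$ and $B = A(G_S)$ yields $\rank(H) = \rank(A(H)) \le \rank(A(G_S)) = \rank(G_S)$, which is the claim.

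Since this is a one-line invocation of the Proposition, I do not anticipate a genuine obstacle; the only delicate point is the bookkeeping in the middle step---verifying that both the off-diagonal adjacency pattern and the diagonal loop pattern of $H$ are inherited verbatim from $G_S$, so that $A(H)$ literally occurs as a submatrix of $A(G_S)$ rather than merely being related to one.
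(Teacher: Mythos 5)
Your proof is correct and is exactly the argument the paper intends: the paper states the corollary as an immediate consequence of the submatrix proposition, and your middle step (exhibiting $A(H)$ as a principal submatrix of $A(G_S)$, diagonal loop pattern included) is just that one-line argument spelled out. Your observation that ``subgraph'' must be read as \emph{vertex-induced} subgraph with inherited loops --- since edge deletion can raise the rank, as in $C_4$ versus $P_4$ --- is a correct and worthwhile precision that the paper leaves implicit but respects in all of its later applications.
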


Throughout this paper, we adopt the standard convention by denoting $K_n,$ $P_n,$ and $C_n$ to be the \textit{complete graph}, \textit{path graph}, and \textit{cycle graph} of order $n,$ respectively. For our purpose, $(P_n)_S$ will be the self-loop path graph of order $n,$ and thus denote
\[
(P_n)_{\alpha_1\alpha_2 \cdots \alpha_i}, \quad 1 \leq \alpha_1 \leq \cdots \leq \alpha_i \leq \cdots \leq n,
\]
to represent each loop located at vertices $\alpha_1,\ldots, \alpha_i,$ respectively.\footnote{The notational consistency up to isomorphism is implicitly assumed. For example in Fig.~\ref{4P1} and \ref{4P2} one may interpret the right-most vertex as the starting point, thus $(P_4)_1 \cong (P_4)_4$ and $(P_4)_3 \cong (P_4)_2.$}
For example,

\vspace{-1em}
\begin{figure}[H]
	\centering
	\begin{minipage}[b]{0.4\textwidth}
		\centering    \includegraphics[width=0.75\textwidth]{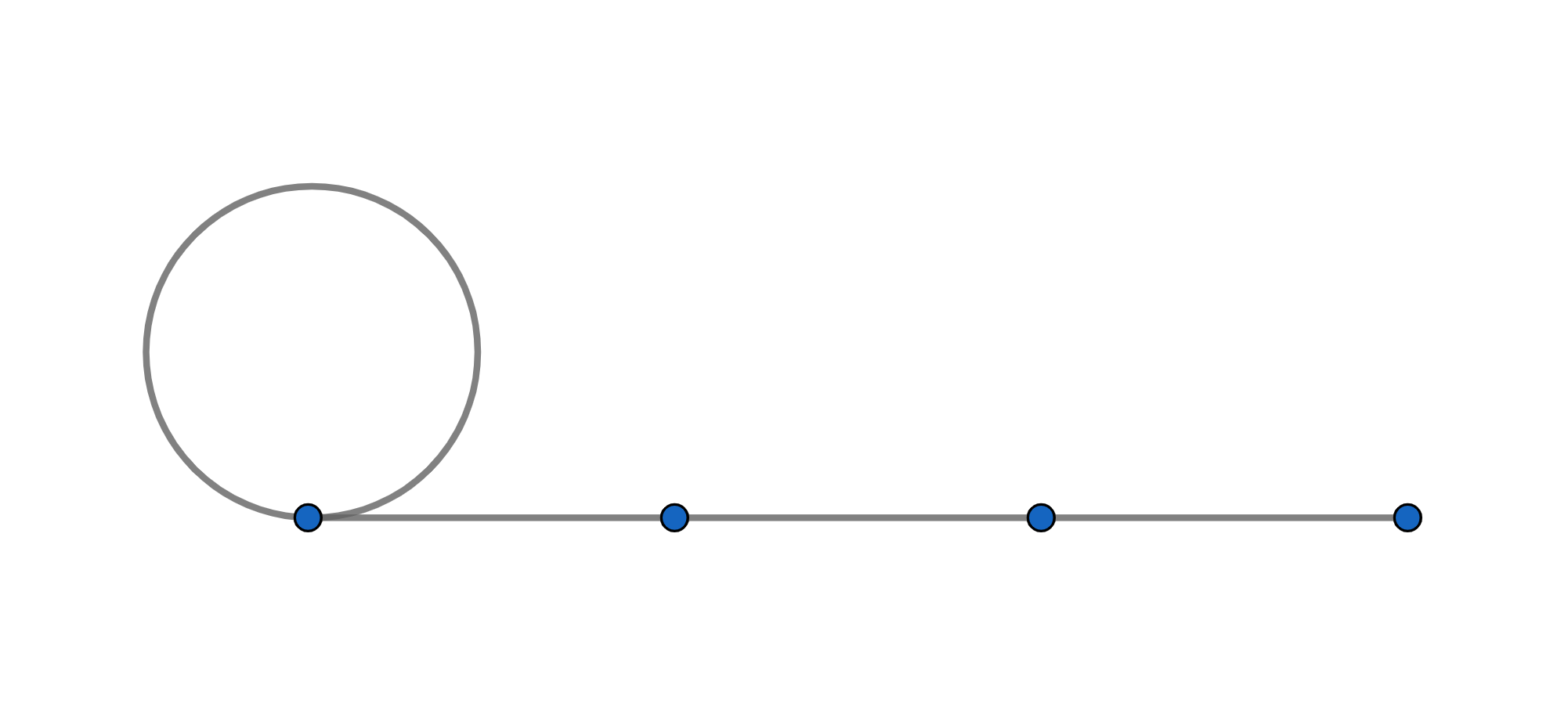}
		\caption{$(P_4)_1$}
		\label{4P1}
	\end{minipage}
	\hspace{2em}
	\begin{minipage}[b]{0.4\textwidth}
		\centering    \includegraphics[width=0.65\textwidth]{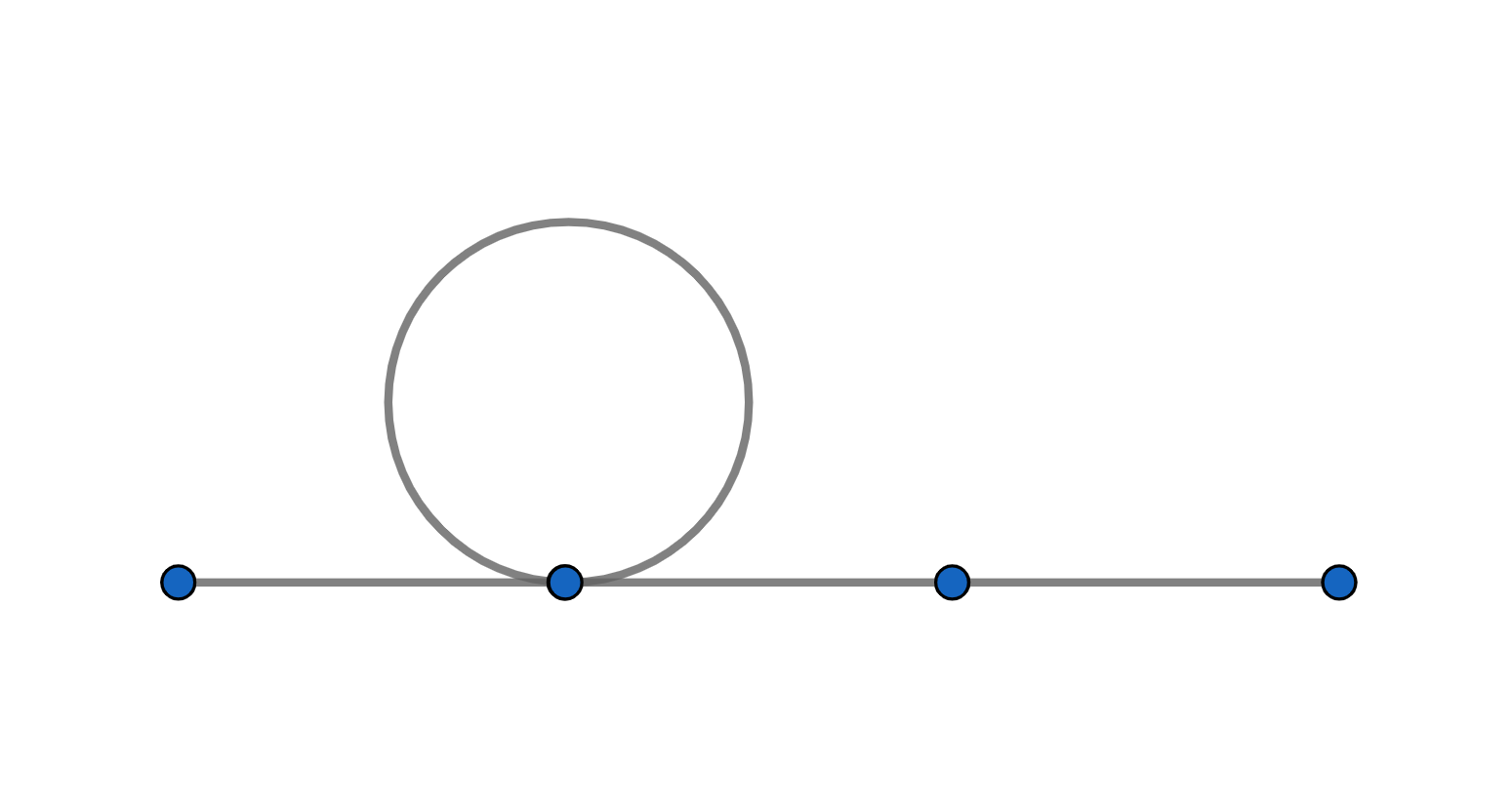}
		\caption{$(P_4)_2$}
		\label{4P2}
	\end{minipage}
\end{figure}

In some proofs, we utilize the following two notions (cf. \cite{cvetkovic2010intro}): the first is the \textit{distance} between vertices. For vertices $v_i$ and $v_j,$ denote by $\dist(v_i,v_j)$ to be the length (i.e., number of edges) of a shortest path between $v_i$ and $v_j.$ If $U$ and $U'$ are non-empty subsets $V(G)$, then the distance $\dist(U,U')$ is defined to be the minimum distance between all vertices $U$ and $U',$ i.e., 
\[
\dist(U,U'):= \min\{\dist(v,v') \mid v \in U, v' \in U'\}.
\]
The second is the graph joins. The \textit{join} of (disjoint) graphs $G_1$ and $G_2,$ denoted by $G_1 \vee G_2,$ is the graph obtained by from the union of disjoint copies of $G_1$ and $G_2$ by joining each vertex of $G_1$ to each vertex of $G_2.$ For our purpose, if $A\subseteq V(G_1),$ we write $G_1 \underset{A}{\vee} G_2$ to mean the graph join of $G_1$ and $G_2$ \textit{over} the set $A,$ that is for which each vertex of $A$ is joined with each vertex of $G_2$ and none of $V(G_1)\backslash A$ is joined with vertices of $G_2.$ 

The study of spectral properties and energy of self-loop graphs is very recent. It was initiated by Gutman et. al.  \cite{gutman2021energy} in 2021. From the perspective of Molecular Chemistry, self-loop graphs represent heteroatoms and its graph energy is closely related to the total $\pi$-electron energy of conjugated molecules, cf. \cite{gutman1990topological} and references therein.
In 2023, the author and team \cite{akbari2023selfloop} have explored some spectral results on self-loop graphs, as well as a characterization of self-loop graphs of order $n$ whose eigenvalues are all positive or non-negative, and for any graphs with a few distinct eigenvalues. On the other hand, characterizations of simple graphs in terms of low rank are well-studied in the literature. In particular, Sciriha \cite[Sect.6]{sciriha1999} proved the characterization of simple graphs of rank 0, 1, 2, and 3: 
\begin{enumerate}[(i)]
	\item A simple graph is of rank 0 if and only if it is an empty graph. 
	\item There are no simple graphs of rank 1. 
	\item The only simple graphs of order $n$ and rank 2 are the complete bipartite graphs $K_{r,n-r},$ for $1 \leq r \leq \floor{\frac{n}{2}}.$
	\item The only simple graphs of rank 3 are the complete tripartite graphs $K_{a,b,c}.$
\end{enumerate}
Characterization results for rank 4 and 5 were then obtained by \cite{chang2011} and \cite{chang2012}, respectively. It is then natural to ask the question: \textit{what about the characterization of self-loop graphs with low rank}? This motivated the study on characterization of self-loop graphs in \cite{rank12char2025}, for which those with rank 1 and 2 are completely characterized. 

\begin{theorem}\cite{rank12char2025}
	Let $G_S$ be a connected self-loop graph of order $n.$ Then, 
	\begin{enumerate}[(i)]
		\item $G_S$ is of rank 1 if and only if $G_S \cong \widehat{K_n}.$
		
		\item $G_S$ is of rank 2 if and only if $G_S \cong \widehat{K_\sigma} \:\vee\: (n-\sigma)K_1$ for $1 \leq \sigma \leq n-1.$  
	\end{enumerate}
\end{theorem}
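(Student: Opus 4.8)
The two ``if'' directions are quick computations: $\widehat{K_n}$ has adjacency matrix the all-ones matrix $J_n$, of rank $1$; and $\widehat{K_\sigma}\vee(n-\sigma)K_1$ has adjacency matrix with exactly two distinct rows --- the all-ones vector and the vector with $\sigma$ leading $1$'s --- which are linearly independent for $1\le\sigma\le n-1$, giving rank $2$. So the plan is to concentrate on the ``only if'' directions, where the organising observation is that each row of $A(G_S)$ is a $0$--$1$ vector, and if $\rank(G_S)=r$ then all $n$ rows lie in, and span, an $r$-dimensional subspace $W\subseteq\R^n$.

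For part (i), suppose $\rank(G_S)=1$, so $A:=A(G_S)$ is a real symmetric matrix of rank one and hence $A=\lambda\, xx^{\top}$ for some $\lambda\in\R\setminus\{0\}$ and $x\in\R^n$. For $n\ge2$, connectedness rules out a zero row (an isolated loopless vertex), so every $x_i\ne0$; then $A_{ii}=\lambda x_i^2\ne0$ together with $A_{ii}\in\{0,1\}$ forces $\lambda x_i^2=1$ for all $i$. Hence $A_{ij}^2=(\lambda x_i^2)(\lambda x_j^2)=1$, and since $A_{ij}\in\{0,1\}$ we get $A_{ij}=1$ for all $i,j$, i.e.\ $A=J_n$ and $G_S\cong\widehat{K_n}$. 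The case $n=1$ is immediate.

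Part (ii) is the substantive half. Suppose $\rank(G_S)=2$. I would first record an elementary lemma: a two-dimensional subspace of $\R^n$ contains at most three nonzero $0$--$1$ vectors, and exactly three only in the form $u$, $v$, $u+v$ with $u,v$ of disjoint support --- proved by a short case analysis on the coordinatewise patterns $(0,0),(1,0),(0,1),(1,1)$. Thus $A(G_S)$ has at most three distinct rows. Next I would collapse duplicate vertices: equal rows $i,j$ force $A_{ii}=A_{ij}=A_{jj}$, so a maximal class of vertices with pairwise equal rows induces either a loopless independent set or a clique carrying a loop at every vertex, and the block between two distinct classes is constant; deleting the repeated rows and columns yields a \emph{connected} self-loop graph $\widetilde G$ of the same rank $2$, now with pairwise distinct rows, hence of order $2$ or $3$. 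A finite check --- using symmetry of $A$, connectedness (which forces every between-class block to be all-ones, otherwise $\widetilde G$ is disconnected) and rank $2$ --- shows the only order-$2$ reduced graphs are $K_2$ and $\widehat{K_1}\vee K_1$, and that no twin-free connected rank-$2$ self-loop graph of order $3$ exists, since its three rows would be $u,v,u+v$ of disjoint support, incompatible with $A$ symmetric. Blowing the classes back up, $K_2$ gives the loopless complete bipartite graphs $K_{r,n-r}$, excluded because $S\ne\emptyset$, while $\widehat{K_1}\vee K_1$ gives precisely $\widehat{K_\sigma}\vee(n-\sigma)K_1$ with $1\le\sigma\le n-1$.

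The step I expect to be the main obstacle is the bookkeeping in part (ii): one has to marry the symmetry of $A(G_S)$ with the $0$--$1$ constraint tightly enough that each surviving distinct-row pattern is pinned to an explicit block form, and one must remember to invoke $S\ne\emptyset$, which is exactly what removes the otherwise valid rank-$2$ simple graphs $K_{r,n-r}$ (and $P_3$). A self-contained alternative to the collapsing argument is to run the case analysis directly on the (at most three) distinct rows: symmetry shows the diagonal is constant within each row-class and the block between any two classes is constant, which again forces the block structure --- so either way the crux is the same.
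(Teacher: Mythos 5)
The paper does not prove this statement: it is quoted verbatim from \cite{rank12char2025} as a known result, so there is no internal proof to compare against. Judged on its own terms, your argument is correct and self-contained. Part (i) via $A=\lambda xx^{\top}$ is clean: connectedness kills zero rows, the diagonal forces $\lambda x_i^2=1$, and then $A_{ij}^2=1$ forces $A=J_n$. The lemma that a $2$-dimensional subspace contains at most three nonzero $0$--$1$ vectors, necessarily of the form $\{u,v,u+v\}$ with disjoint supports, is true (in your ``pattern $(1,1)$ but not $(0,1)$'' branch the three vectors come out as $u,v,u-v$, which is of the stated form after relabelling $u'=v$, $v'=u-v$), and the twin-collapsing step preserves rank and connectedness because equal rows give equal columns by symmetry.

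The only place you are too terse is the exclusion of a twin-free connected order-$3$ graph: ``incompatible with $A$ symmetric'' needs the two-line case split on the supports of $u$ and $v$, which have sizes $(1,1)$ or $(1,2)$ inside $\{1,2,3\}$. If $|u|=|v|=1$, then $u+v$ misses a coordinate $k$, so column $k$ is zero and symmetry makes row $k$ zero, contradicting connectedness. If $u+v=\mathbf{1}$, the vertex $t$ carrying the row $\mathbf{1}$ forces column $t$ to be all ones, which pins the weight-one row to be $e_t$ sitting at some vertex $s\neq t$; the remaining vertex then has a $0$ in column $t$, a contradiction. With that spelled out, and with the explicit remark that the blow-up of $\widehat{K_1}\vee K_1$ is forced by the block structure to be exactly $\widehat{K_\sigma}\vee(n-\sigma)K_1$ with $1\le\sigma\le n-1$ (the $\sigma=n$ case being rank $1$ and the $\sigma=0$ case excluded by $S\neq\emptyset$), your write-up is a complete proof of the cited theorem.
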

Some results on rank 3 characterizations of connected self-loop graphs have been obtained and the research is currently in progress. It is the purpose of this paper to establish a \textit{partial} result for rank 3 characterizations. The organization of this paper is as follows. In  Sect.~\ref{Sec2}, we prove that it is not possible to obtain rank 3 graphs that contain the self-loop cycle graphs of order $n\geq 5.$ As a consequence, we provide a characterization of rank 3 triangle-free self-loop cyclic graphs of order \textit{at least} 4 in Sect.~\ref{Sec3}, more precisely they must be isomorphic to one of the graphs in Fig.~\ref{fig:H1}, Fig.~\ref{fig:H2}, Fig.~\ref{fig:H2'}, Fig.~\ref{fig:H3}, Fig.~\ref{fig:H4}, and Fig.~\ref{fig:H5}. Lastly, we propose some interesting open problems that are not pursued in this article in Sect.~\ref{openproblem}.

\section{Initial results on self-loop cycle graphs of order at least five}
\label{Sec2}
\subsection{Consecutive loops}

We begin to investigate the possibility of obtaining rank 3 self-loop graphs based on the position of loops in $(C_n)_S$ for $n\geq 5.$ The discussion will be separated into two scenarios, one for which all $\sigma$ loops are consecutive, and another that are not. More specifically, if $\ell_1,\ell_2,\ldots, \ell_\sigma$ are the $\sigma$ loops, then consecutive loops means $\dist(v_{\ell_i},v_{\ell_{i+1}})=1$ for all $1\leq i \leq \sigma-1.$   

\begin{lemma}
	\label{consloop}
	For $n \geq 5,$ there are no rank 3 cycle graphs $(C_n)_S$ with consecutive loops.
\end{lemma}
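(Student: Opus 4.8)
The plan is to compute the rank of $(C_n)_S$ directly when the $\sigma$ loops sit on consecutive vertices, and show it is never equal to $3$ for $n\geq 5$. Label the vertices $v_1,\dots,v_n$ around the cycle so that the loops are precisely at $v_1,\dots,v_\sigma$. Then $A\big((C_n)_S\big)=A(C_n)+D$, where $D$ is diagonal with $D_{ii}=1$ for $1\le i\le\sigma$ and $0$ otherwise. I would split on the size of $\sigma$ versus $n$: the extreme case $\sigma=n$ (the full-loop cycle $\widehat{C_n}$) should be handled first by a clean standalone computation, and then $1\le\sigma\le n-1$, where there is at least one loopless vertex, can be attacked with subgraph/row-reduction arguments.

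**Main line of argument via submatrices.** The key observation is Corollary~\ref{subgraphrank}: any induced subgraph caps the rank from below, so it suffices to exhibit, inside $(C_n)_S$ with consecutive loops and $n\ge 5$, an induced subgraph of rank $\ge 4$ — or, when that fails for small $\sigma$, to show the whole matrix already has rank $\le 2$ or rank $\ge 4$ but never exactly $3$. Concretely, for $\sigma$ small (say $\sigma\in\{1,2\}$) the loopless part of the cycle contains a long induced path $P_k$ with $k\ge 4$, and since $\rank(P_k)=k-1$ for the loopless path (it is $k$ when $k$ even, $k-1$ when $k$ odd — I would use the standard fact $\rank(P_k)=2\lfloor k/2\rfloor$), a loopless induced $P_5$ already gives rank $4$. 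For intermediate $\sigma$, I would look for an induced subgraph that is a short self-loop path with a loop at one or both ends plus loopless interior, and directly read off that its $3\times 3$ or $4\times 4$ adjacency matrix is nonsingular or has rank $\ge 4$. The arithmetic here is genuinely finite: consecutive loops means the "loop block" is an interval, so up to isomorphism the induced-subgraph possibilities on $4$ or $5$ consecutive vertices form a short explicit list, and each can be checked.

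**The full-loop and near-full-loop cases.** The remaining danger zone is $\sigma$ close to $n$, where every vertex or almost every vertex carries a loop, so one cannot find a big loopless induced path. Here I would compute $\det\big(A(C_n)+D\big)$ and the rank directly. For $\widehat{C_n}$ the matrix is circulant with first row $(1,1,0,\dots,0,1)$, so its eigenvalues are $1+2\cos(2\pi j/n)$, $j=0,\dots,n-1$; counting how many of these vanish (only possible when $3\mid n$, giving a $2$-dimensional kernel) shows $\rank(\widehat{C_n})\in\{n-2,n\}$, which exceeds $3$ once $n\ge 5$. For $\sigma=n-1$ or $n-2$, a rank-one or rank-two perturbation argument (or again a direct minor computation on a well-chosen $4\times4$ submatrix containing the unique loopless vertex and its neighbours) should push the rank above $3$.

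**Expected obstacle.** The bookkeeping is routine once organized, so the real work is making the case split clean: I expect the fiddly part to be the "medium $\sigma$" regime, where neither a long loopless path nor a clean circulant computation is directly available, and one must argue that some $4\times4$ induced submatrix straddling the boundary between the looped arc and the loopless arc is already nonsingular (or of rank $\ge4$). Getting a uniform choice of such a submatrix that works for all $5\le n$ and all relevant $\sigma$ — rather than a case-by-case menagerie — is the main thing to get right; I would try to phrase it as: take the two loopless vertices adjacent to the loop-arc endpoints together with two of the looped vertices, and show the resulting matrix (which looks like a bordered $2\times2$ loop block) has rank $4$.
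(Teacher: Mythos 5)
Your overall strategy coincides with the paper's: use Corollary~\ref{subgraphrank} to bound the rank from below by exhibiting a small induced self-loop subgraph of rank at least $4$, and fall back on direct computation when the loops nearly fill the cycle. The auxiliary facts you invoke are essentially right (rank of the loopless path, the circulant spectrum $1+2\cos(2\pi j/n)$ of $\widehat{C_n}$), with the small caveat that ``$\rank(\widehat{C_n})\in\{n-2,n\}$ exceeds $3$ once $n\ge 5$'' is not literally true for $n=5$: you must add that the kernel is nontrivial only when $3\mid n$, so the rank is $n$ when $3\nmid n$ and $n-2\ge 4$ when $3\mid n$.

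The genuine gap is exactly where you flag it: the ``medium/large $\sigma$'' regime, and the uniform $4\times 4$ submatrix you propose there fails. Take the loop arc to be $v_1,\dots,v_\sigma$ with $\sigma=n-2$, so the two loopless vertices $v_{n-1},v_n$ are adjacent. The induced subgraph on these two loopless vertices together with any two looped vertices is, up to isomorphism, $(P_4)_{12}$, $(P_4)_{14}$, or $K_2\cup\widehat{K_2}$, and each of these has rank exactly $3$ (for $(P_4)_{12}$, for instance, the first two rows differ by the last row, and for $(P_4)_{14}$ one has $R_1-R_2=-(R_4-R_3)$). Worse, for $(C_5)_{123}$ \emph{every} $4\times 4$ principal submatrix is one of $(P_4)_{123}$, $(P_4)_{12}$, $(P_4)_{14}$, all of rank $3$, while the full $5\times 5$ matrix is nonsingular; so no uniform $4\times4$ certificate can exist and a direct $5\times5$ computation is unavoidable there. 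The fix is a different dichotomy, which is what the paper uses: if $\sigma\le n-3$ there are three consecutive loopless vertices, which together with one adjacent looped vertex induce $(P_4)_1$ of rank $4$; if $\sigma\ge n-2$ and $n\ge 6$ then $\sigma\ge 4$, so four consecutive looped vertices induce $\widehat{P_4}$ of rank $4$; the only leftovers are $(C_5)_S$ with $\sigma\in\{3,4,5\}$, which are checked by hand. As written, your middle case does not go through without this reorganization.
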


\begin{proof}
	For $n\geq 5,$ if $1\leq \sigma \leq n-3,$ then $(C_n)_S$ contains the subgraph $(P_4)_1$ (cf. Fig.~\ref{4P1}) of rank 4. Thus, it suffices to consider only consecutive loops with $\sigma \geq n-2.$ It is clear that $(C_5)_S$ with $\sigma=3, 4, 5$ will be of rank $5, 5, 4,$ respectively. For $n\geq 6,$ for $\sigma\geq 4,$ $(C_n)_S$ contains the subgraph $\widehat{P_4}$ (cf. Fig.~\ref{4P1234}) of rank 4. By Corollary~\ref{subgraphrank}, there are no rank 3 cycle graphs of order $n\geq 5$ with consecutive loops. 
\end{proof}

\begin{figure}[H]
	\centering    \includegraphics[width=0.5\textwidth]{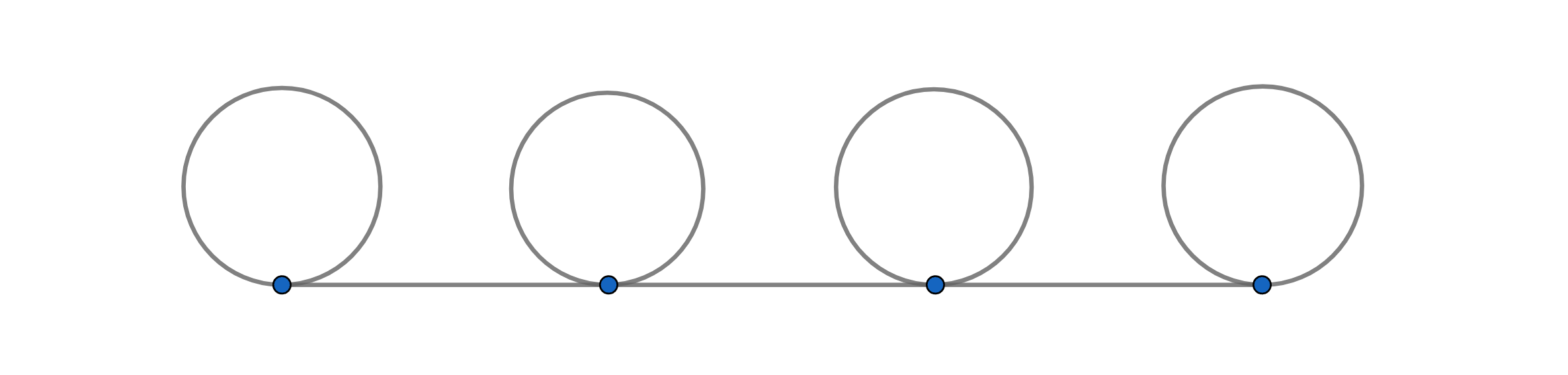}
	\caption{$\widehat{P_4}$}
	\label{4P1234}
\end{figure}
\subsection{Non-consecutive loops}

It follows from the proof of Lemma~\ref{consloop} that $(C_n)_S$ with consecutive loops of $\sigma \geq 4$ will not be of rank 3. Let's consider a \textit{cluster} of loops: $\scrC_i, i=1,2,3,$ to be the clusters of one, two, and three \textit{consecutive} loops, respectively. 
For ``non-consecutive'' to be meaningful, we require $\dist(\scrC_i, \scrC_j) \geq 2$ for $1 \leq i,j\leq 3.$ Thus, it suffices to consider $
S= \bigcup^3_{i=1}a_i \scrC_i$ where $a_i \in \N \cup \{0\}$ is the number of such clusters on $(C_n)_S,$ so that $\sigma=|S|= a_1 + 2a_2 + 3a_3.$ Some examples of different clusters of loops are given below for better visualization.

\vspace{-1em}
\begin{figure}[H]
	\centering
	\hspace{-2em}
	\begin{minipage}[b]{0.35\textwidth}
		\centering    \includegraphics[width=0.9\textwidth]{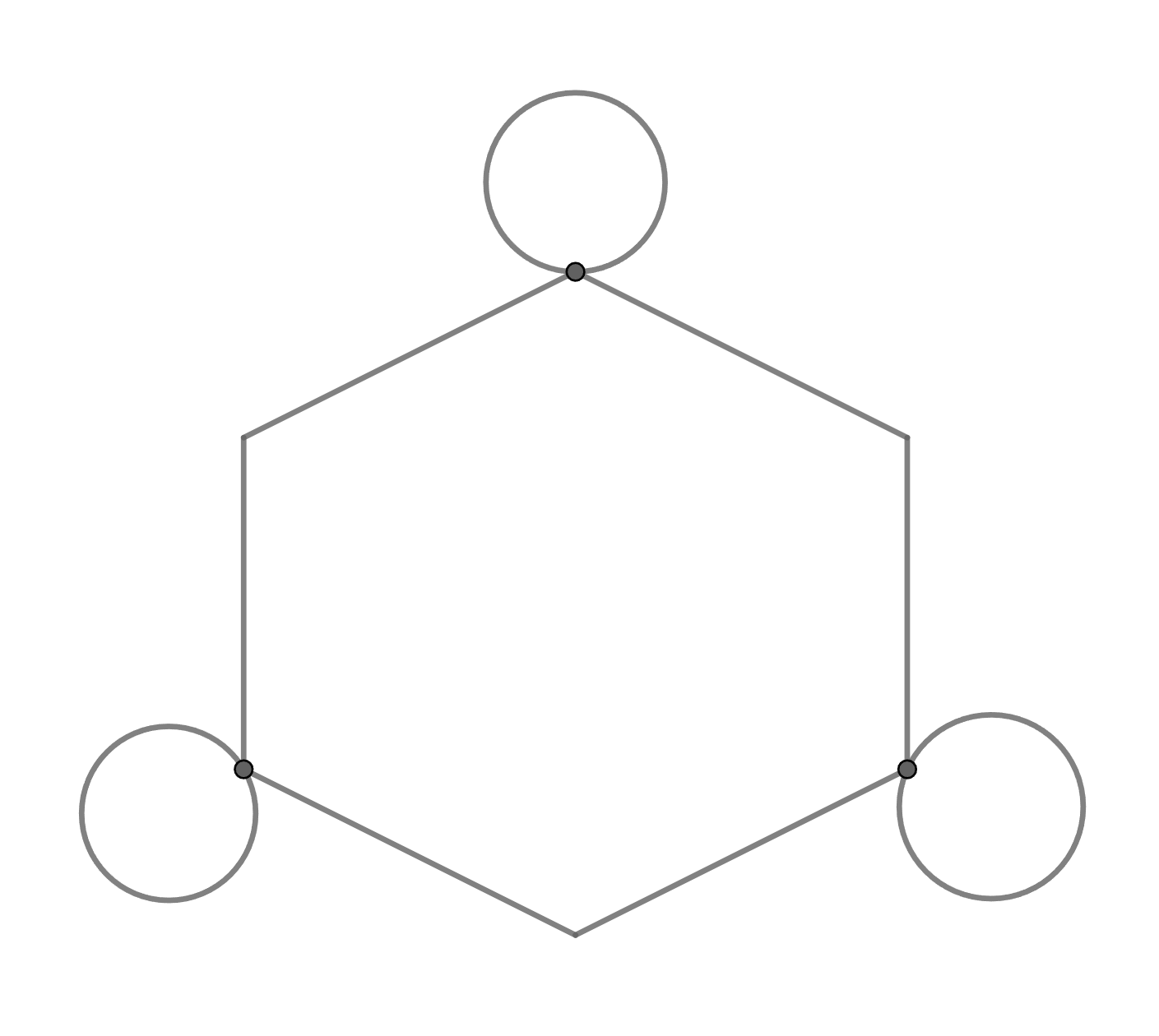}
		\caption{$3\scrC_1$}
	\end{minipage}
	\hspace{-2em}
	\begin{minipage}[b]{0.35\textwidth}
		\centering    \includegraphics[width=0.92\textwidth]{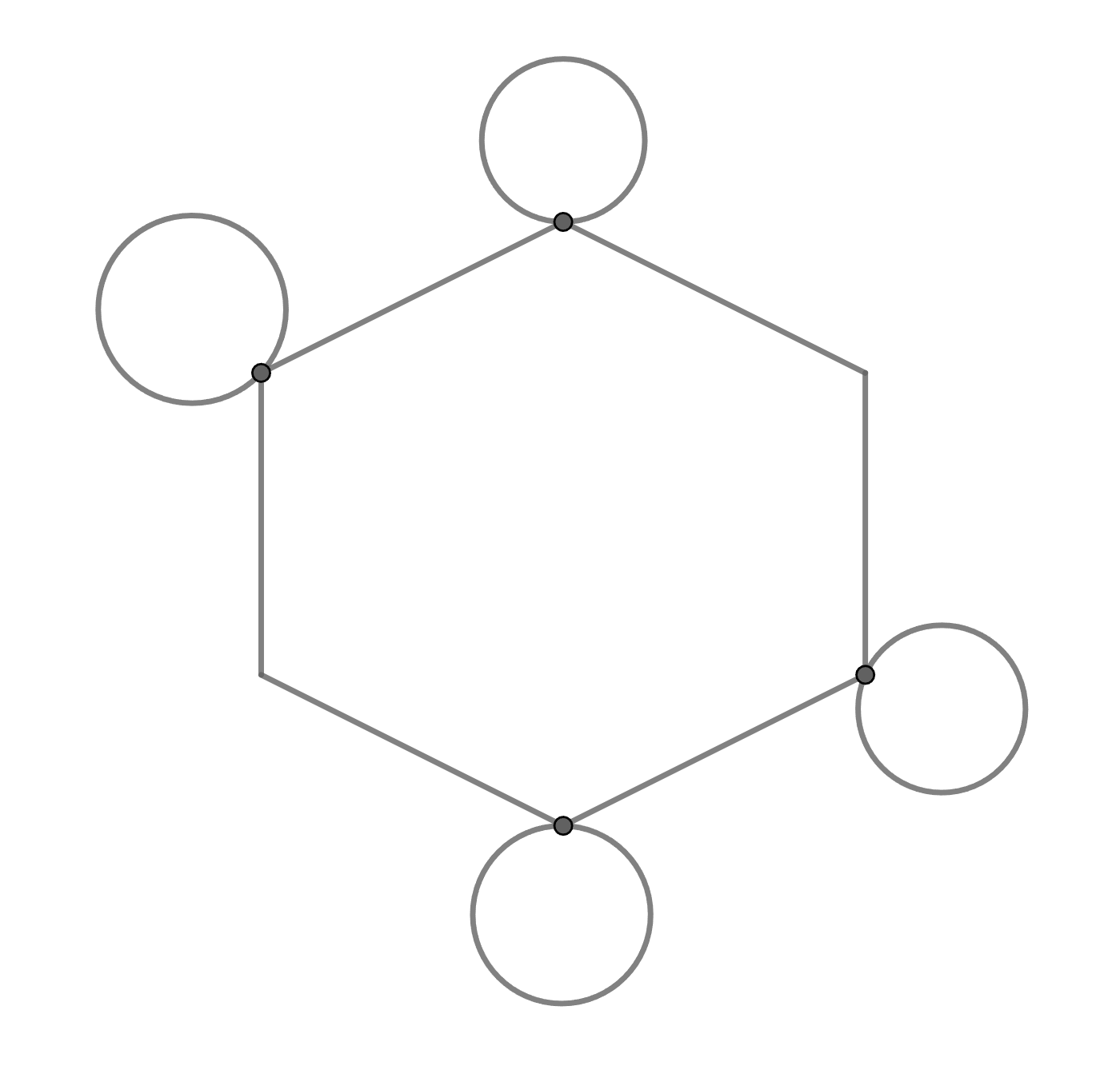}
		\caption{$2\scrC_2$}
	\end{minipage}
	\hspace{-2em}
	\begin{minipage}[b]{0.4\textwidth}
		\centering    \includegraphics[width=0.83\textwidth]{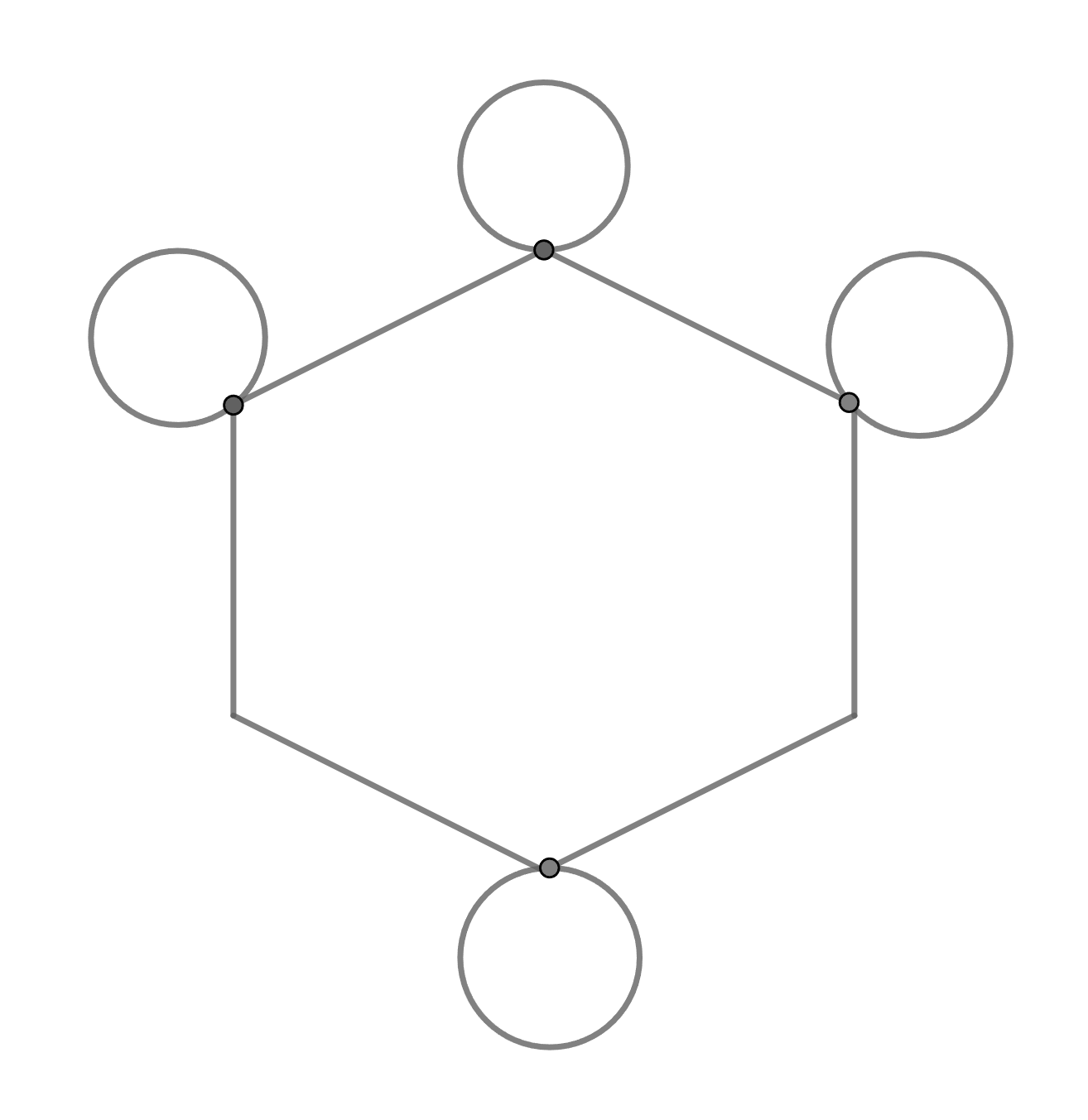}
		\caption{$\scrC_1 \cup \scrC_3$}
	\end{minipage}
\end{figure}

\begin{lemma}
	For $n \geq 5,$ there are no rank 3 cycle graphs $(C_n)_S$ with non-consecutive loops.    
\end{lemma}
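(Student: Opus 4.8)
The strategy is to combine the structural reduction already set up (decomposing $S$ into clusters $\scrC_1,\scrC_2,\scrC_3$ separated by distance $\geq 2$) with a careful search for small forbidden subgraphs of rank $\geq 4$. The key tool is Corollary~\ref{subgraphrank}: to rule out rank $3$ it is enough to exhibit, in every admissible configuration, an induced subgraph of rank $4$. I would first dispose of the easy ranges of the parameters $a_1,a_2,a_3$ and then treat the remaining small cases by hand, using the fact that only finitely many cluster-configurations can avoid an obvious forbidden subgraph once $n$ is large.

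\textbf{Step 1: bounding the number of clusters.} Since the clusters are pairwise at distance $\geq 2$, having $a_1+a_2+a_3 \geq 3$ forces a stretch of the cycle containing three loop-clusters with loop-free vertices in between; in particular one can locate an induced $(P_4)$ containing exactly one loop at an endpoint, i.e. a copy of $(P_4)_1$, which has rank $4$. (More generally a long enough induced path through two well-separated single loops already contains $(P_4)_1$ or a loopless $P_4$ as an induced subgraph.) This reduces the problem to $a_1+a_2+a_3 \leq 2$, i.e. at most two clusters.

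\textbf{Step 2: the one-cluster cases.} If there is a single cluster, then $S$ consists of $\sigma \in \{1,2,3\}$ consecutive loops, which is exactly the consecutive-loops situation already excluded by Lemma~\ref{consloop} for $n \geq 5$. So nothing new arises here.

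\textbf{Step 3: the two-cluster cases.} The remaining configurations are $2\scrC_1$, $\scrC_1\cup\scrC_2$, $\scrC_1\cup\scrC_3$, $2\scrC_2$, $\scrC_2\cup\scrC_3$, $2\scrC_3$, with the two clusters separated by one or more loop-free vertices on each of the two arcs of $C_n$. For each, I would argue that for $n \geq 5$ one can always find an induced subgraph of rank $4$: typically either a loopless induced $P_4$ (when an arc between the clusters has $\geq 3$ loop-free vertices), or $(P_4)_1$ (when a loop sits next to a short loop-free arc), or for the cases with a $\scrC_2$ or $\scrC_3$ cluster a suitable self-loop path such as $\widehat{P_4}$, $(P_4)_{12}$ or $(P_4)_{123}$, each of which should be checked once to have rank $4$. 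Because $n \geq 5$ forces the total number of vertices to exceed the total loop-count plus the minimal separation, at least one arc between the clusters (or one of the clusters itself together with an adjacent loopless vertex) always furnishes such a subgraph; the bookkeeping is a short finite case check on how the $n - \sigma$ loop-free vertices are distributed on the two arcs.

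\textbf{Main obstacle.} The delicate point is Step 3: I must make sure the forbidden-subgraph argument covers \emph{every} distribution of the loop-free vertices between the two arcs without gaps, especially the tight cases where each arc has only one or two loop-free vertices and $n$ is as small as $5$ or $6$ — there the generic ``long induced path'' argument is unavailable and one must identify by direct computation a small rank-$4$ self-loop subgraph (on $\leq n$ vertices) living in that exact configuration. Organizing these boundary cases cleanly, and verifying the handful of small rank-$4$ certificates, is where the real work lies; the rest is routine monotonicity via Corollary~\ref{subgraphrank}.
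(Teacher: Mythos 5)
There is a genuine gap here, on two levels. First, your Step~1 reduction is not sound as stated: with three or more clusters you claim one can always locate an induced $(P_4)_1$ or a loopless induced $P_4$, but $(C_6)_S$ with $S=\{v_1,v_3,v_5\}$ (three copies of $\scrC_1$, pairwise at distance $2$) contains neither --- every four consecutive vertices carry exactly two loops. That configuration is still of rank $\geq 4$, but the certificate is $(P_4)_{13}$, not the ones you name, so the reduction to at most two clusters does not go through with your argument (nor is it needed: it suffices to examine one pair of adjacent clusters, whatever the total number of clusters is). Second, and more seriously, two of the three certificates you propose for the $\scrC_2$/$\scrC_3$ cases are not in fact of rank $4$: for both $(P_4)_{12}$ and $(P_4)_{123}$ the second row of the adjacency matrix minus the first row equals the fourth row, so each has rank $3$. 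A rank-$3$ subgraph rules nothing out via Corollary~\ref{subgraphrank}, so the cases you intended to close with these subgraphs remain open.

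Beyond these concrete errors, Step~3 --- which you correctly identify as ``where the real work lies'' --- is exactly the content of the lemma and is not carried out. The paper organizes that work by the distance $d=\dist(\scrC_i,\scrC_j)$ between two adjacent clusters: for $d=2$ every pair except $(\scrC_1,\scrC_1)$ contains $(P_4)_{124}$ (rank $4$), and $(\scrC_1,\scrC_1)$ contains $(P_4)_{13}$ using a fifth vertex; for $d=3$ one finds $(P_5)_{145}$ or $(P_5)_{14}$ (rank $5$), falling back to $(P_4)_{13}$ when $n=5$; for $d\geq 4$ one finds $(P_4)_1$. If you reorganize your case analysis around this distance parameter and replace your invalid certificates with ones that genuinely have rank $\geq 4$ (each verified by a short determinant or row-reduction computation), the argument closes; as written, it does not.
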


\begin{proof}
	We split the proof into discussing several cases of `gaps' between $\scrC_i.$ 
	\begin{enumerate}[(i)]
		\item Let $\dist(\scrC_i,\scrC_j)=2.$ Observe that any pair $(\scrC_i,\scrC_j)$ for $1 \leq i,j \leq 3,$ except for $(\scrC_1,\scrC_1),$ contains the subgraph $(P_4)_{124}$ of rank 4.
		\begin{figure}[H]
			\centering    \includegraphics[width=0.4\textwidth]{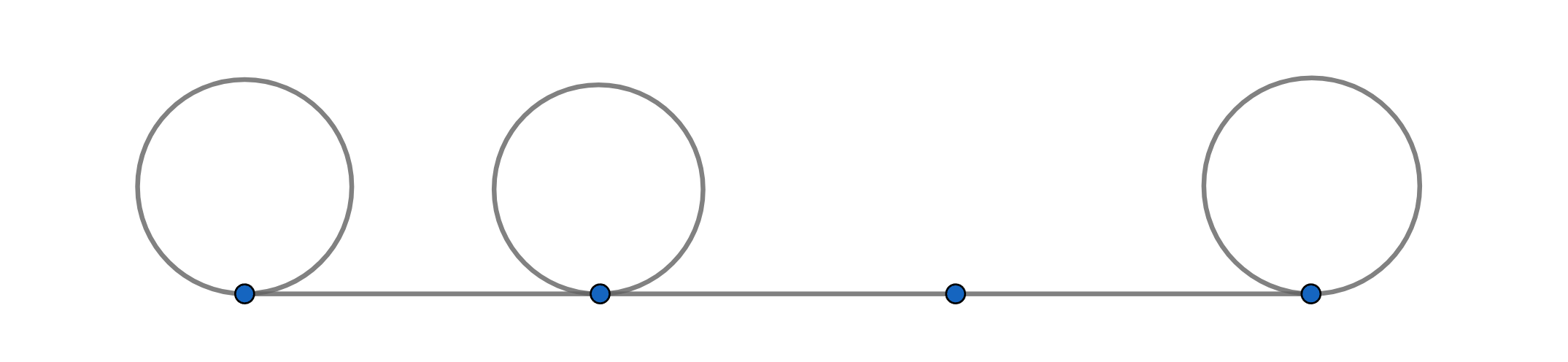}
			\caption{$(P_4)_{124}$}
			\label{4P124}
		\end{figure}
		\noindent For $(\scrC_1,\scrC_1),$ since $n \geq 5,$ there exists a vertex $v_0$ such that $(C_n)_S$ contains the subgraph $(P_4)_{13}$ of rank 4.
		\begin{figure}[H]
			\centering    \includegraphics[width=0.4\textwidth]{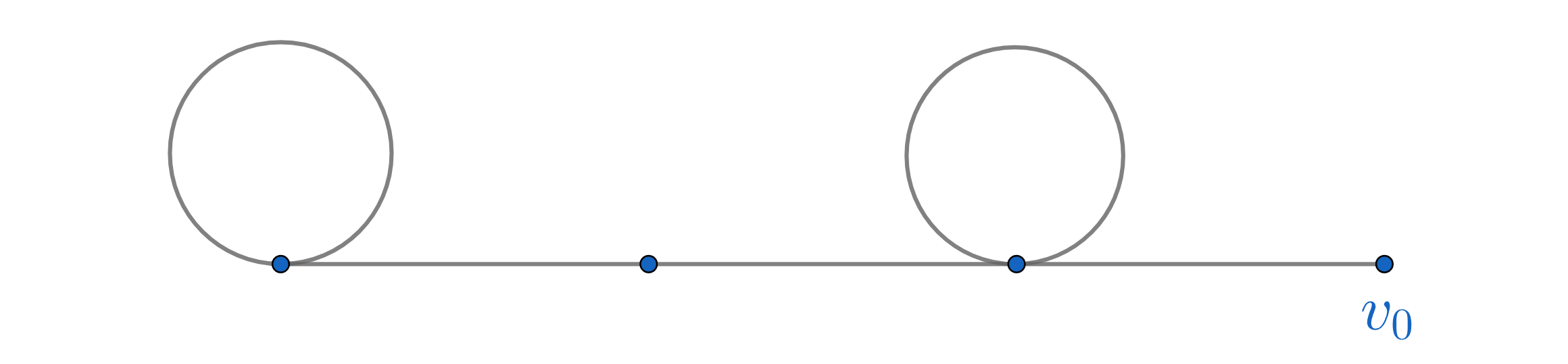}
			\caption{$(P_4)_{13}$}
			\label{4P13}
		\end{figure}
		\item Let $\dist(\scrC_i,\scrC_j)=3.$ Then any pair $(\scrC_i,\scrC_j)$ for $1 \leq i,j \leq 3,$ except for $(\scrC_1,\scrC_1),$ contains the subgraph $(P_5)_{145}$ of rank 5. 
		\begin{figure}[H]
			\centering    \includegraphics[width=0.55\textwidth]{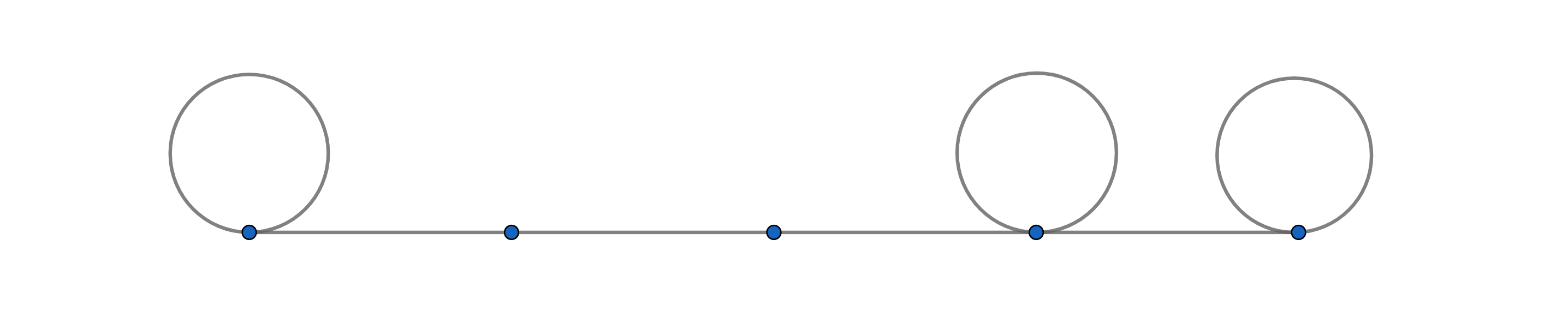}
			\caption{$(P_5)_{145}$}
			\label{5P145}
		\end{figure}
		\noindent For $(\scrC_1,\scrC_1)$ and $n \geq 6,$ $(C_n)_S$ contains the subgraph $(P_5)_{14}$ of rank 5;
		\begin{figure}[H]
			\centering    \includegraphics[width=0.5\textwidth]{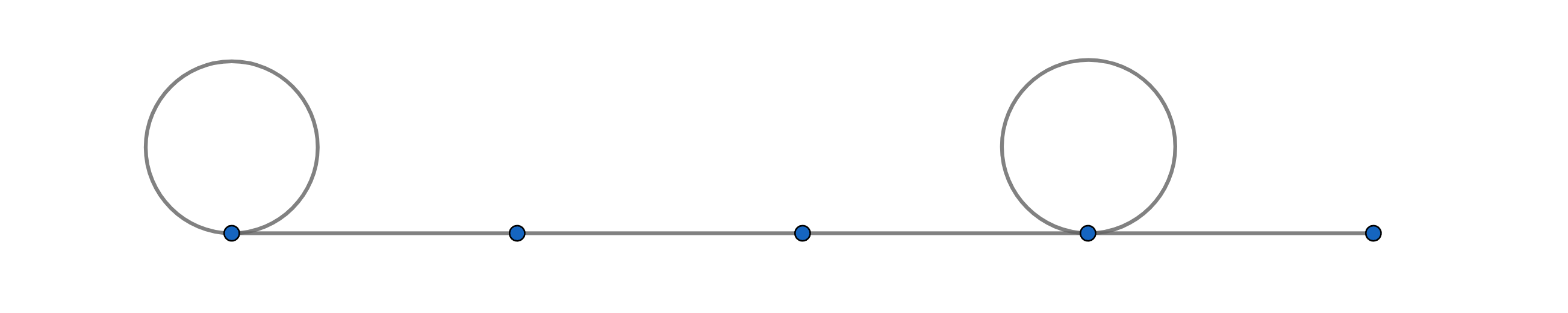}
			\caption{$(P_5)_{14}$}
			\label{5P14}
		\end{figure}
		\noindent if $n=5,$ the graph contains the subgraph $(P_4)_{13}$ of rank 4, similar to Fig.~\ref{4P13}.
		\item Let $\dist(\scrC_i,\scrC_j) \geq 4.$ Then, $(C_n)_S$ contains the subgraph $(P_4)_1$ (cf. Fig.~\ref{4P1}) of rank 4.
	\end{enumerate}
	By applying Corollary~\ref{subgraphrank}, we reach at the desired claim.
\end{proof}

These two lemmas can be concluded in one theorem.
\begin{theorem}
	Let $(C_n)_S$ be the cycle graph of order $n\geq 5.$ For any nonempty $S\subseteq V(C_n),$ then $\rank((C_n)_S) \geq 4.$      
\end{theorem}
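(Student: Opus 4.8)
The plan is to assemble the statement directly from the two preceding lemmas, since together they exhaust all possible configurations of a nonempty loop set $S$ on $C_n$ for $n \geq 5$. First I would observe that any nonempty $S \subseteq V(C_n)$ falls into exactly one of two classes: either all loops of $(C_n)_S$ are consecutive (in the sense $\dist(v_{\ell_i}, v_{\ell_{i+1}}) = 1$ for the cyclically ordered loops), or they are not. The first case is handled by Lemma~\ref{consloop}, which gives $\rank((C_n)_S) \geq 4$. For the second case, the non-consecutive loops decompose uniquely into clusters $\scrC_1, \scrC_2, \scrC_3$ of one, two, or three consecutive loops (longer clusters already produce $\widehat{P_4}$ or $(P_4)_1$ as a subgraph, hence rank $\geq 4$ by Corollary~\ref{subgraphrank}), separated by gaps of distance $\geq 2$; the second lemma then gives $\rank((C_n)_S) \geq 4$ in every such case.

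The proof is therefore essentially a one-line dichotomy: given nonempty $S$, apply Lemma~\ref{consloop} if the loops are consecutive, and apply the non-consecutive lemma otherwise; in both cases $\rank((C_n)_S) \geq 4$, which is precisely the claim. I would state it as: ``This is an immediate consequence of Lemma~\ref{consloop} and the preceding lemma, since for any nonempty $S \subseteq V(C_n)$ the loops of $(C_n)_S$ are either consecutive or non-consecutive.''

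The only subtlety worth a sentence of care is making sure the case split is genuinely exhaustive. One should note that ``consecutive'' and ``non-consecutive'' as defined do cover all possibilities: if the loops are not all consecutive, then reading around the cycle they break into maximal runs of consecutive loops, any two distinct runs being at distance $\geq 2$; runs of length $\geq 4$ are subsumed (as in the proof of Lemma~\ref{consloop}, via $\widehat{P_4}$) so it suffices to treat runs of length $1$, $2$, or $3$, i.e. the clusters $\scrC_1, \scrC_2, \scrC_3$, which is exactly the setting of the non-consecutive lemma. There is no real obstacle here — the genuine work was already done in the two lemmas, and the main (minor) point is simply to articulate clearly that these two lemmas between them cover every nonempty loop configuration on $C_n$ with $n \geq 5$.
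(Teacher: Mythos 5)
Your proposal is correct and matches the paper exactly: the paper presents this theorem with the single remark that ``these two lemmas can be concluded in one theorem,'' which is precisely the consecutive/non-consecutive dichotomy you spell out. Your added care about exhaustiveness of the case split is a reasonable (and slightly more explicit) version of the same argument.
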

\section{Main results}
\label{Sec3}

From the results established above, one ought to consider cycles graphs of low order to possibly obtain rank 3 self-loop graphs. In fact, considerable effort has been put forth in \cite{rank12char2025} where rank 1 and rank 2 connected self-loop graphs are completely characterized. Here, we focus on getting partial results for rank 3 connected self-loop graphs by working with 4-cycle graphs.

\begin{theorem}
	\label{thm1}
	Up to isomorphisms, all rank 3 triangle-free connected cyclic graphs with one self-loop constructed from $G_S=(C_4)_S$ with $|S|=1$ are 
	\[
	H_1= G_S \underset{\cV_I}{\vee} W \quad\text{ and }\quad 
	H_2= G_S \underset{V(G_S)\setminus\cV_I}{\vee} W,
	\]
	where $\cV_{I}$ is the independent set of loopless vertices in $V(G_S),$ and $W \nsubseteq V(G_S)$ is an independent set of $K_1'$s.
	
\end{theorem}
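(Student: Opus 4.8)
The plan is to prove both inclusions: first that $H_1$ and $H_2$ genuinely are triangle-free connected cyclic rank-$3$ graphs carrying a single self-loop, and then that any graph obtained from $G_S=(C_4)_S$ by a single join $G_S\underset{A}{\vee}W$ (with $A\subseteq V(G_S)$ and $W\neq\emptyset$ an independent set of new vertices) having these properties must be one of $H_1,H_2$. Fix $V(C_4)=\{v_1,v_2,v_3,v_4\}$ with edges $v_1v_2,v_2v_3,v_3v_4,v_4v_1$ and $S=\{v_1\}$. The loopless vertices $v_2,v_3,v_4$ induce the path $v_2v_3v_4$, whose unique maximum independent set is $\cV_I=\{v_2,v_4\}$; hence $V(G_S)\setminus\cV_I=\{v_1,v_3\}$, so $H_1=G_S\underset{\{v_2,v_4\}}{\vee}W$ and $H_2=G_S\underset{\{v_1,v_3\}}{\vee}W$.

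For the sufficiency direction, consider $H_1$: its vertex set partitions as $\{v_2,v_4\}$ against $\{v_1,v_3\}\cup W$, every vertex of one part is adjacent to every vertex of the other, there are no edges inside either part, and there is a single loop (at $v_1$); thus $H_1$ is connected, contains the cycle $C_4$, has exactly one self-loop, and is triangle-free (a triangle would require two adjacent vertices in a common part). For the rank, each row of $A(H_1)$ indexed by a vertex of $(\{v_1,v_3\}\cup W)\setminus\{v_1\}$ equals the indicator vector of $\{v_2,v_4\}$, while the $v_4$-row equals the $v_2$-row, so $A(H_1)$ has only the three distinct rows belonging to $v_1,v_2,v_3$; restricting these to the columns $v_1,v_2,v_3$ yields $\left(\begin{smallmatrix}1&1&0\\1&0&1\\0&1&0\end{smallmatrix}\right)$, which is nonsingular, so $\rank(H_1)=3$. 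Interchanging the two parts gives the identical argument for $H_2$. (When $W=\emptyset$ both collapse to $G_S$, itself of rank $3$; the hypothesis $W\nsubseteq V(G_S)$ excludes that case.)

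For the necessity direction, let $H=G_S\underset{A}{\vee}W$ with $W\neq\emptyset$ be triangle-free, connected and of rank $3$. If $A=\emptyset$ then $W$ consists of isolated vertices, contradicting connectedness; if $A$ contained two $C_4$-adjacent vertices $v_i,v_{i+1}$ then $\{v_i,v_{i+1},w\}$ would be a triangle for any $w\in W$. Hence $A$ is a nonempty independent set of $C_4$, so $A\subseteq\{v_1,v_3\}$ or $A\subseteq\{v_2,v_4\}$. Next, the induced subgraph of $H$ on $V(C_4)$ is exactly $(C_4)_S$, so the principal submatrix of $A(H)$ on these rows and columns is $A((C_4)_S)$, whose columns at $v_1,v_2,v_3$ are linearly independent; since $\rank(H)=3$, the three columns $c_{v_1},c_{v_2},c_{v_3}$ of $A(H)$ form a basis of its column space. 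Expanding $c_{v_4}$ and each $c_w$ ($w\in W$) in this basis and comparing the four coordinates indexed by $V(C_4)$ forces $c_{v_4}=c_{v_2}$ (so $v_2$ and $v_4$ have the same neighbourhood in $H$) and, for $w\in W$ with $(p,q,r):=(a_{v_1w},a_{v_2w},a_{v_3w})$, forces the coefficients $(p-r,\ r,\ q-p+r)$; reading off the $w$-coordinate of the same expansion and using $a_{ww}=0$ then yields $(p-r)^2+2qr=0$, that is, $p=r$ and $qr=0$.

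To conclude, combine these constraints. If $A\subseteq\{v_2,v_4\}$: since all vertices of $W$ are joined to the same set $A$, and $v_2,v_4$ have equal neighbourhoods, $A$ contains $v_2$ iff it contains $v_4$; being nonempty, $A=\{v_2,v_4\}=\cV_I$ and $H\cong H_1$. If $A\subseteq\{v_1,v_3\}$: a vertex $w\in W$ adjacent to $v_1$ only has $(p,q,r)=(1,0,0)$ and one adjacent to $v_3$ only has $(0,0,1)$, each contradicting $p=r$; hence $A$ is not a singleton and $A=\{v_1,v_3\}=V(G_S)\setminus\cV_I$, giving $H\cong H_2$. The most delicate step is the coordinate bookkeeping that extracts $c_{v_4}=c_{v_2}$ and the relation $(p-r)^2+2qr=0$; alternatively, the singleton cases may be dispatched more concretely by noting that $(C_4)_S$ with one pendant vertex appended at any $v_i$ contains an induced copy of $(P_4)_1$ or $(P_4)_2$, each of rank $4$, so $\rank(H)\ge4$ by Corollary~\ref{subgraphrank}.
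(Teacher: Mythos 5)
Your proof is correct, but it runs on a genuinely different engine from the paper's. The paper argues combinatorially: for each new vertex $v\in W$ it uses triangle-freeness to reduce the possible attachments to three cases, kills the ``attach to a single vertex'' case by exhibiting a forbidden subgraph ($(P_4)_1$ or $(P_4)_2$, each of rank $4$) inside the resulting graph and invoking Corollary~\ref{subgraphrank}, and then extends to arbitrary $W$ by induction on graph joins together with an edge-maximality argument. You instead work directly with the adjacency matrix: for sufficiency you observe that $H_1$ and $H_2$ are $K_{2,2+|W|}$ with one loop, have only three distinct rows, and contain the nonsingular $3\times3$ principal minor on $\{v_1,v_2,v_3\}$; for necessity you use that $c_{v_1},c_{v_2},c_{v_3}$ must span the column space, and the resulting coordinate identities ($c_{v_4}=c_{v_2}$ and $(p-r)^2+2qr=0$ with $p,q,r\in\{0,1\}$) force $A$ to be exactly $\cV_I$ or $V(G_S)\setminus\cV_I$. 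Your necessity argument is arguably tighter than the paper's, since it derives the admissible attachment sets from the rank condition alone rather than from a case analysis plus an unelaborated ``induction on the graph joins''; the trade-off is that you fix the uniform-join form $G_S\underset{A}{\vee}W$ at the outset (which matches the theorem's statement but leaves implicit the exclusion of vertices of $W$ attaching to different sets --- a configuration that in fact has rank at least $5$, and which the paper's vertex-by-vertex framing at least gestures toward). Both proofs reach the same two graphs; your computations (the $3\times3$ minor of determinant $-1$, the coefficients $(p-r,r,q-p+r)$, and the identity $a_{ww}=(p-r)^2+2qr$) all check out.
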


\begin{proof}
	Let $\cV_{I}= \{v_a,v_b\}.$ If $v \in W,$ then $v \neq v_a,v_b.$ If $v$ is adjacent to $v_a$ (or $v_b$) and at least one vertex in $N(a)$ (or $N(b)$), then at least one triangle is formed. Thus, $v$ is adjacent to either 
	
	\begin{center}
		\begin{enumerate}[(i)]
			\item only one vertex of $G_S,$ or
			\item only $v_a$ and $v_b,$ or 
			\item only $N(v_a)$ and $N(v_b).$
		\end{enumerate}
	\end{center}
	For (i), the resulting graph is either of the following three graphs:
	\begin{figure}[H]
		\centering
		\hspace{-2em}    
		\vspace{-1em}
		\begin{minipage}[b]{0.33\textwidth}
			\centering    \includegraphics[width=0.7\textwidth]{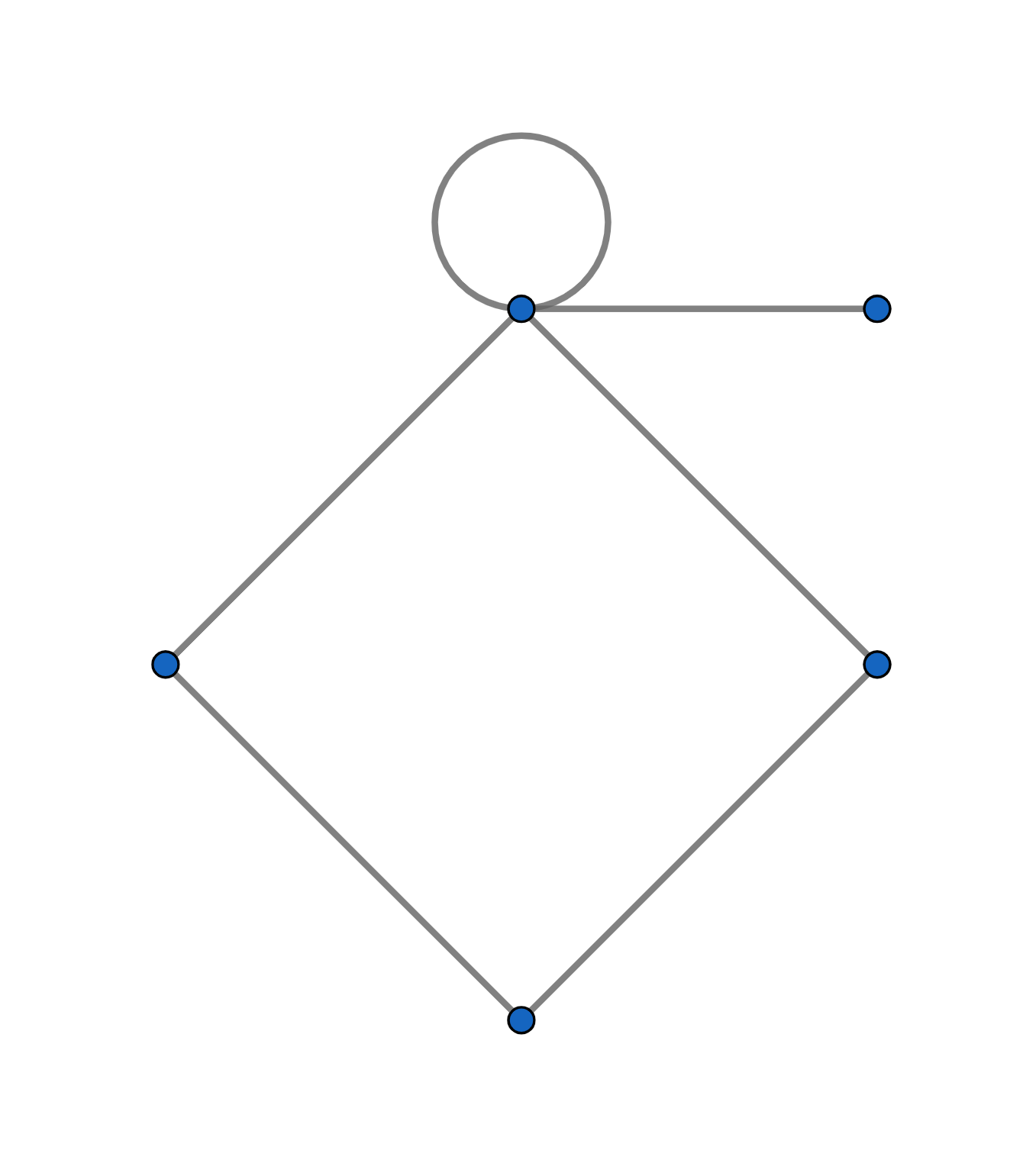}
			\caption{}
		\end{minipage}
		\hspace{-1em}
		\begin{minipage}[b]{0.33\textwidth}
			\centering    \includegraphics[width=1.0\textwidth]{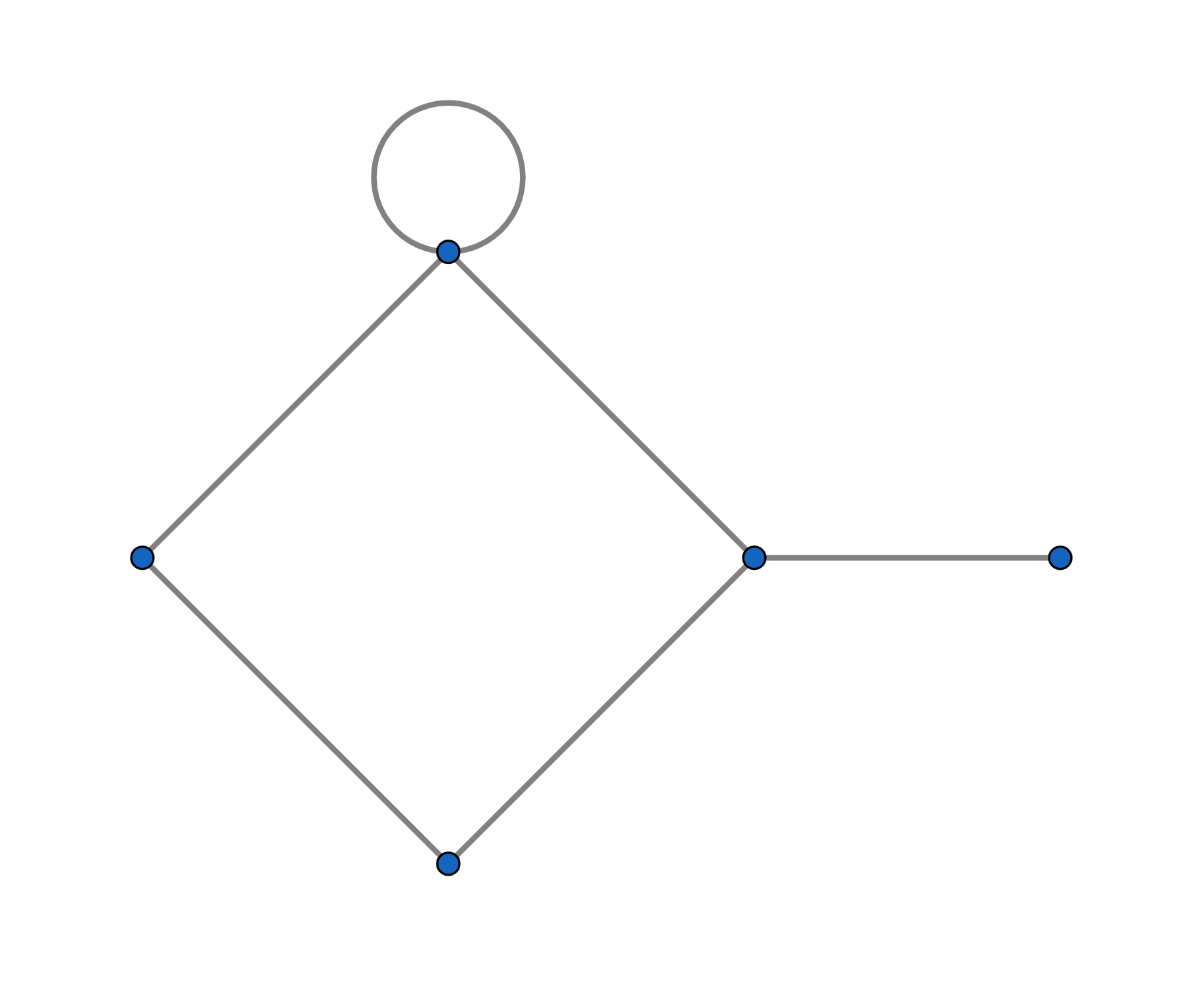}
			\caption{}
		\end{minipage}
		\hspace{-1em}
		\begin{minipage}[b]{0.33\textwidth}
			\centering    \includegraphics[width=0.75\textwidth]{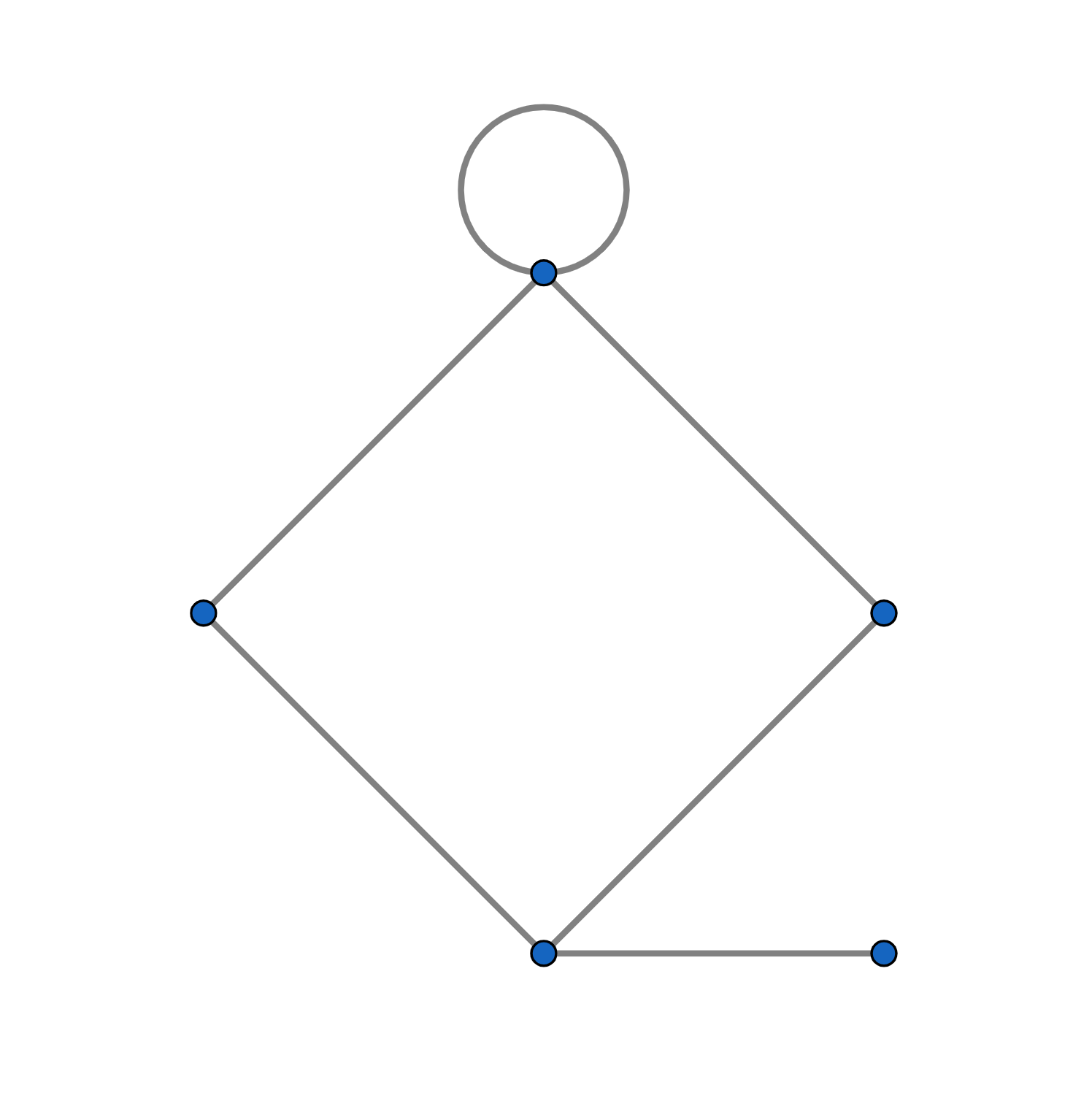}
			\caption{}
		\end{minipage}
	\end{figure}
	\noindent all of which contain either of the two subgraphs $(P_4)_1$ and $(P_4)_2$ of rank 4, so case (i) can be ignored.
	The remaining cases (ii) and (iii) give two non-isomorphic graphs of rank 3:
	\begin{figure}[H]
		\centering
		\hspace{-1em}
		\begin{minipage}[b]{0.4\textwidth}
			\centering    
			\includegraphics[width=0.9\textwidth]{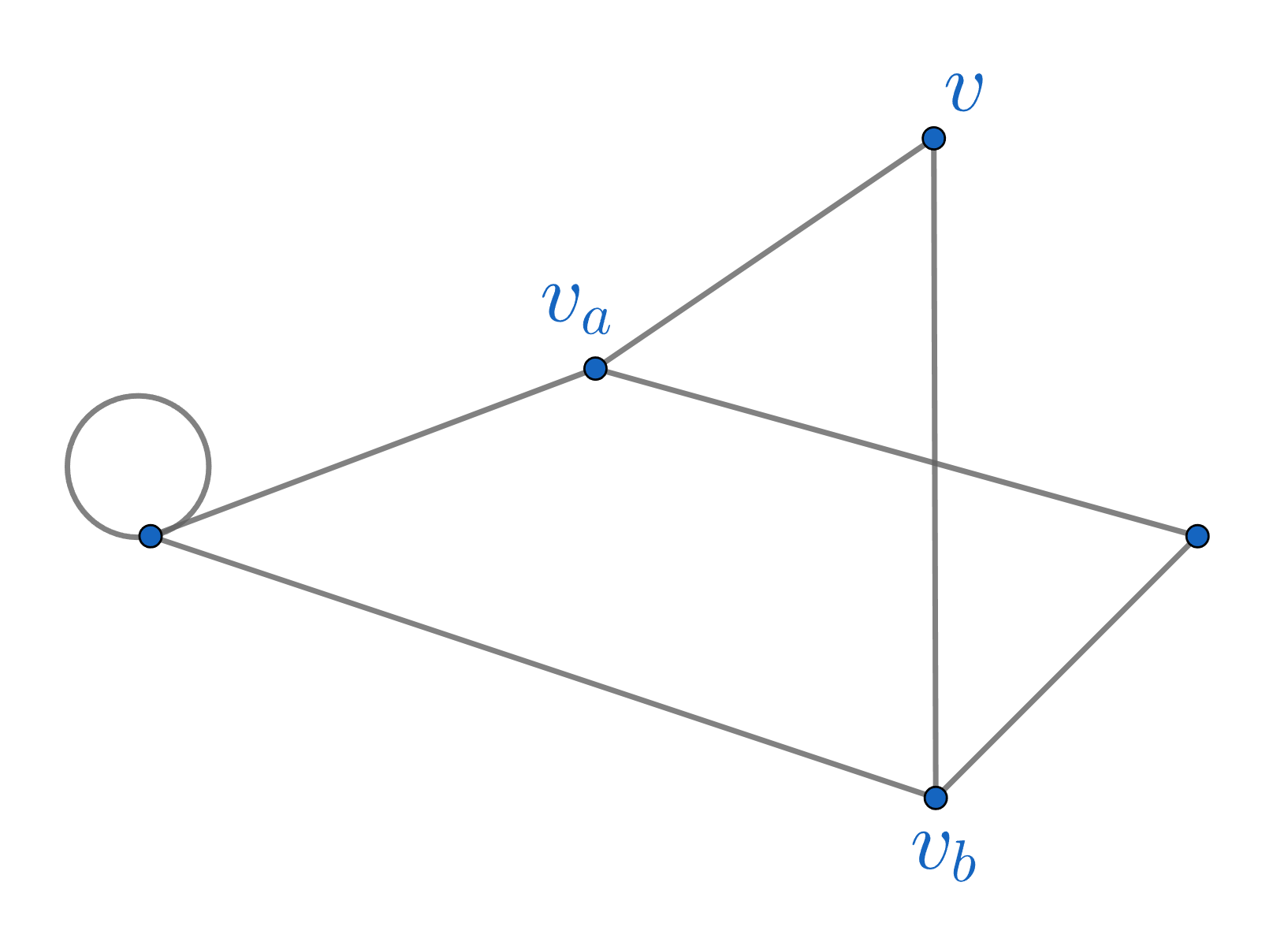}
			\caption{}
			\label{fig:14}
		\end{minipage}
		\hspace{3em}
		\begin{minipage}[b]{0.4\textwidth}
			\centering    
			\includegraphics[width=0.9\textwidth]{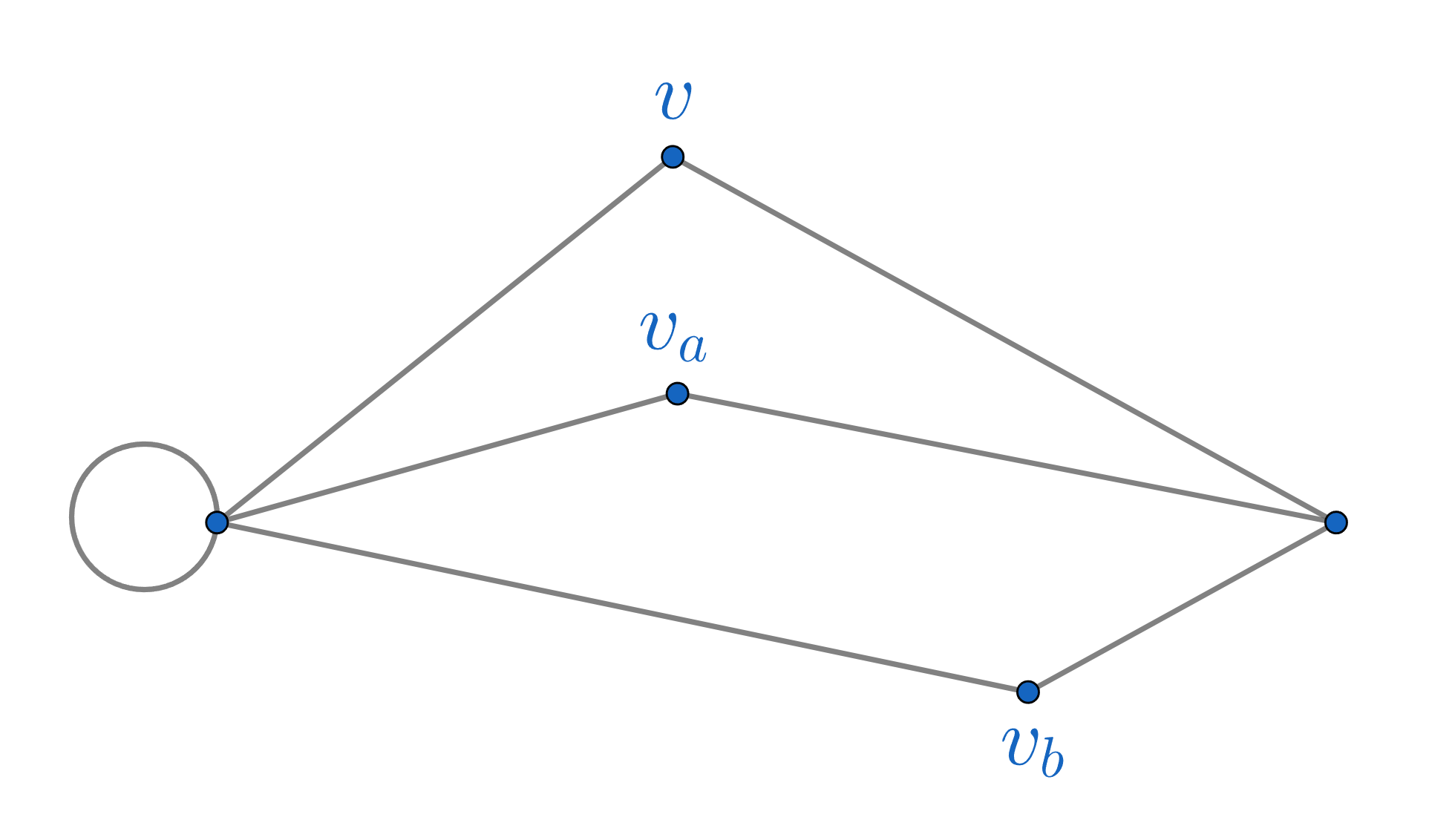}
			\caption{}
			\label{fig:15}
		\end{minipage}
	\end{figure}
	By induction on the graph joins, we obtain the following graphs of rank 3, which we refer as $H_1$ and $H_2,$ respectively
	\begin{figure}[H]
		\centering
		\hspace{-2em}
		\vspace{-5em}
		\begin{minipage}[b]{0.4\textwidth}
			\centering    
			\includegraphics[width=0.9\textwidth]{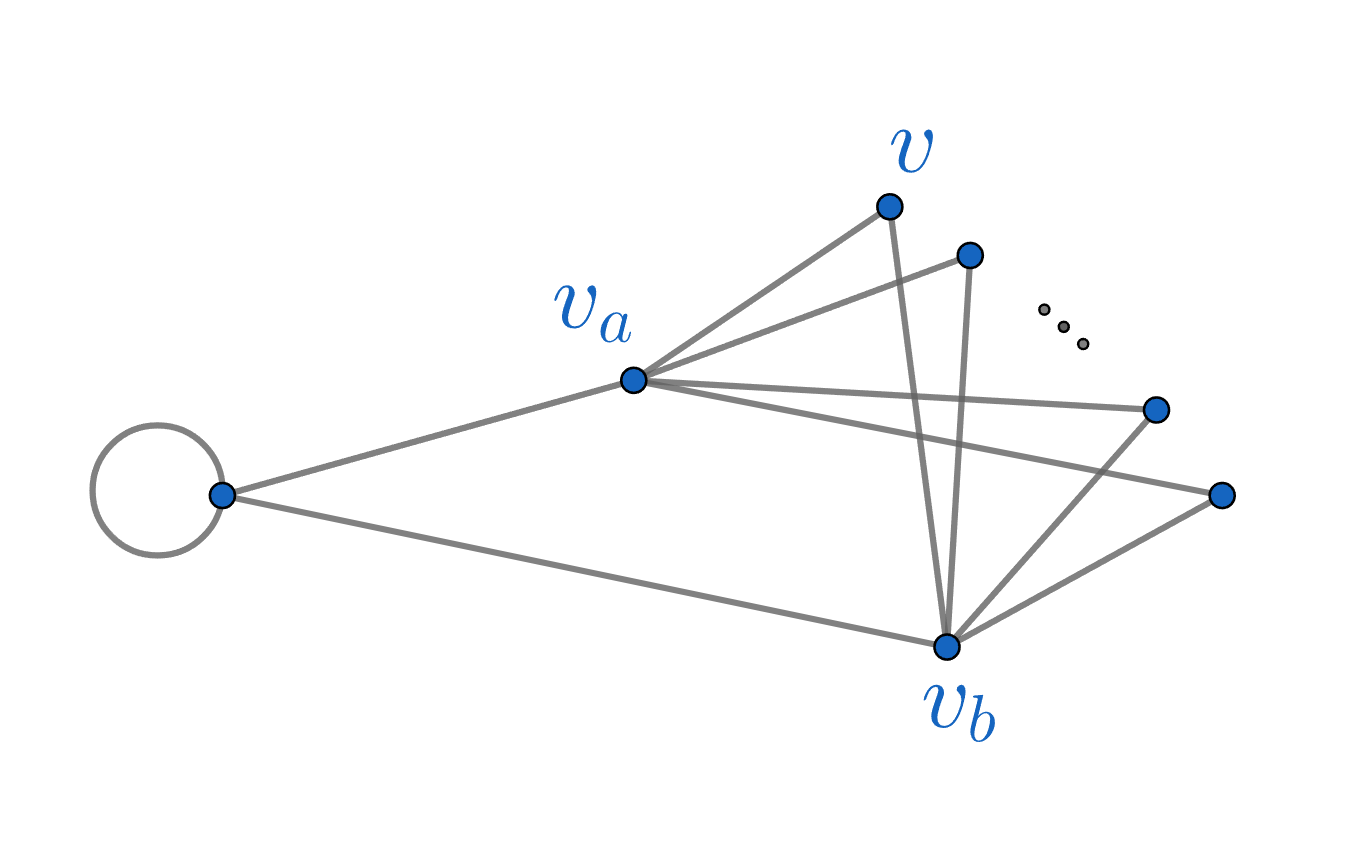}
			\caption{$H_1$}
			\label{fig:H1}
		\end{minipage}
		\hspace{3em}
		\begin{minipage}[b]{0.4\textwidth}
			\centering    
			\includegraphics[width=0.9\textwidth]{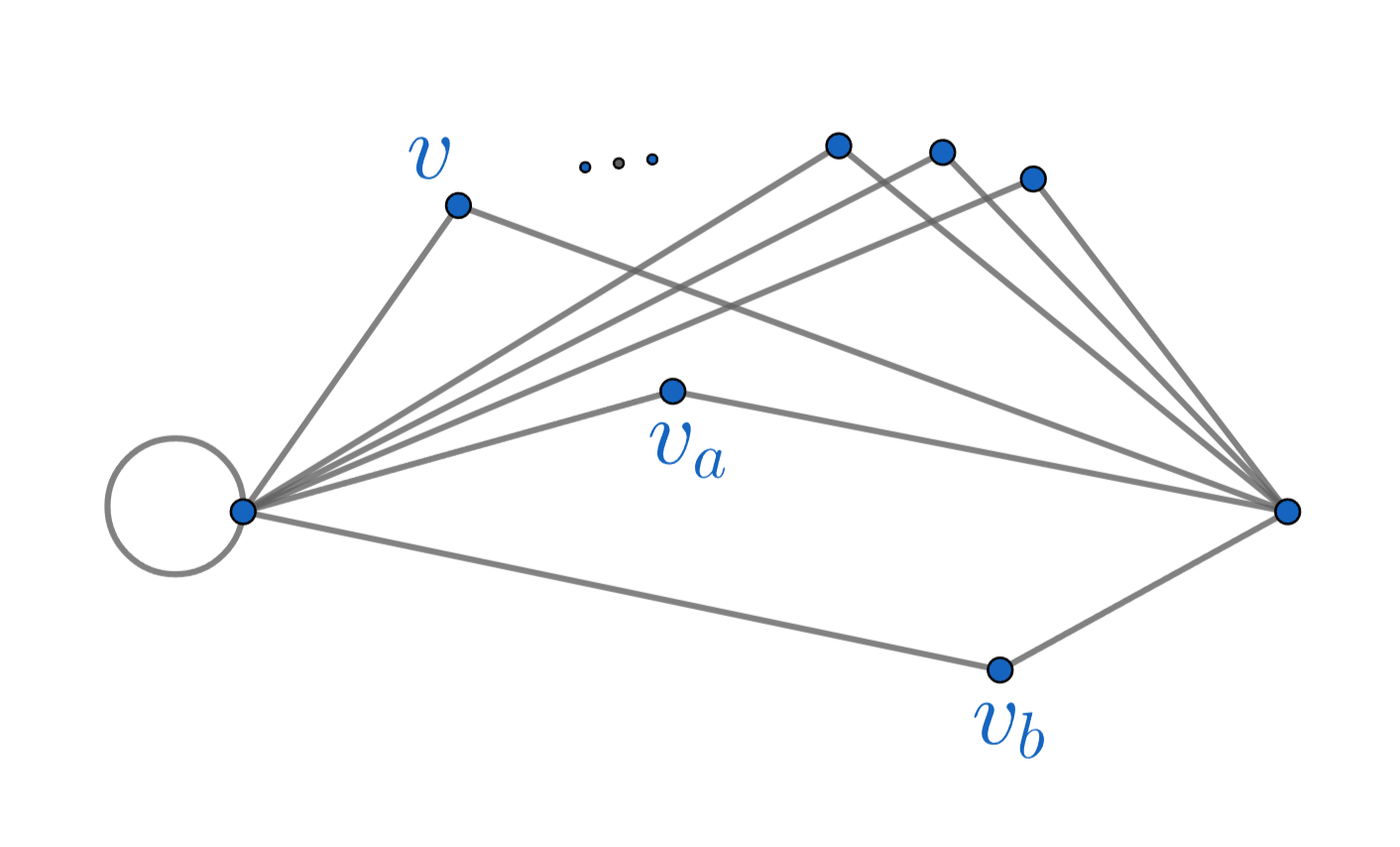}
			\caption{$H_2$}
			\label{fig:H2}
		\end{minipage}
	\end{figure}
	
	\vspace{4em}
	Note that the edge sets $E(H_1)$ and $E(H_2)$ are maximal with respect to rank 3. For if there is an edge $e' \notin E(H_i), i=1,2,$ such that $e'$ is incident with any two vertices in $V(H_i),$ then either at least a triangle is formed, or the new graph contains a subgraph of rank 4, a contradiction. Thus, $H_1$ and $H_2$ are the only triangle-free connected self-loop graphs of rank 3 when adding independent loopless vertices.
\end{proof}

\begin{remark}
	\label{rem1}
	It can be verified that the only self-loop 4-cycle $(C_4)_S$ with $2 \leq |S| \leq 4$ of rank 3 is $(C_4)_S$ with $|S|=2.$ Moreover, such $S$ must be an independent set in $C_4.$ For any other cases, $(C_4)_S$  has rank 4.
\end{remark}

\begin{theorem}	
	Up to isomorphisms, all rank 3 triangle-free connected cyclic graphs with two self-loops constructed from  $G_S=(C_4)_S$ with $|S|=2$ are  
	\[
	H'_2= G_S \underset{V(G_S) \setminus \cV_I}{\vee} W, 
	\]	
	where  $\cV_{I}$ is the independent set of loopless vertices in $V(G_S),$ and $W \nsubseteq V(G_S)$ is an independent set of $K_1'$s. 
	
\end{theorem}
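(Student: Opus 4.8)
The plan is to repeat the case analysis from the proof of Theorem~\ref{thm1}, tracking how the presence of a \emph{second} self-loop changes the outcome. By Remark~\ref{rem1}, the hypothesis forces $G_S=(C_4)_S$ to carry its two loops at a pair of non-adjacent vertices; fix a cyclic labelling $v_1v_2v_3v_4$ of $C_4$ with loops at $v_1$ and $v_3$, so that $\cV_I=\{v_2,v_4\}$ is the independent set of loopless vertices and $V(G_S)\setminus\cV_I=\{v_1,v_3\}$. Let $v$ be a loopless vertex to be attached. Since the attached $K_1$'s are to form an independent set and the graph must stay connected and triangle-free, $v$ is adjacent only to vertices of $G_S$ and cannot be adjacent to two adjacent vertices of $C_4$; hence $v$ is adjacent either (i) to a single vertex of $G_S$, or (ii) to the non-adjacent pair $\{v_1,v_3\}$ of looped vertices, or (iii) to the non-adjacent pair $\{v_2,v_4\}$ of loopless vertices.

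Case (i) I would eliminate with a rank $4$ induced subgraph. If $v$ is pendant at a looped vertex, say $v_1$, then $\{v,v_1,v_2,v_3\}$ induces the path $v_3v_2v_1v$ with loops at its first and third vertices, a copy of $(P_4)_{13}$ (cf. Fig.~\ref{4P13}); if $v$ is pendant at a loopless vertex, say $v_2$, then $\{v,v_2,v_3,v_4\}$ induces the path $v_4v_3v_2v$ with a loop at its second vertex, a copy of $(P_4)_2$ (cf. Fig.~\ref{4P2}). Both have rank $4$, so Corollary~\ref{subgraphrank} disposes of case (i). For the remaining two cases I would compute the rank of the resulting $5\times5$ adjacency matrix directly. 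In case (iii) the rows indexed by $v_2$ and $v_4$ coincide, but the four rows indexed by $v_1,v_2,v_3,v$ are linearly independent (equivalently, $\{v,v_1,v_2,v_3\}$ already induces a copy of $K_{1,3}$ bearing loops at two of its leaves, of rank $4$), so the rank is $4$ and case (iii) is impossible. In case (ii), by contrast, the three rows indexed by $v_2,v_4,v$ all coincide — each of these vertices is adjacent to exactly $v_1$ and $v_3$ and bears no loop — leaving only three independent rows; the rank is $3$, and the graph is precisely $G_S\underset{\{v_1,v_3\}}{\vee}\{v\}=G_S\underset{V(G_S)\setminus\cV_I}{\vee}\{v\}$.

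An induction over the attached vertices then finishes the proof: applying the same case analysis to each added vertex forces every one of them to be joined to exactly $v_1$ and $v_3$; any two such vertices must be non-adjacent, since an edge between them together with their common neighbour $v_1$ would close a triangle; and since all the added vertices share the neighbourhood $\{v_1,v_3\}$ they contribute only duplicate rows, so the rank stays $3$. Maximality of $E(H'_2)$ with respect to rank $3$ follows exactly as in Theorem~\ref{thm1}: any additional edge either lies inside $W$ or joins a vertex of $W$ to $v_2$ or $v_4$, and in either case a triangle through $v_1$ or $v_3$ appears, while any additional edge among $v_1,v_2,v_3,v_4$ was already handled. The crux — and the only point of departure from Theorem~\ref{thm1} — is the rank computation in case (iii): here the two loops, rather than one, are exactly what pushes this configuration from rank $3$ up to rank $4$, so that only the single family $H'_2$ survives.
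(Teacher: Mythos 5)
Your proof is correct and follows essentially the same route as the paper: the same three-way case analysis on the neighbourhood of an attached vertex, eliminating the pendant and ``loopless-pair'' cases by rank-$4$ subgraphs and keeping only the join over the two looped vertices, followed by induction and the maximality argument. The only differences are cosmetic --- your cases (ii) and (iii) are swapped relative to the paper's labelling, you make the rank computations explicit via row arguments where the paper cites figures, and you restrict to loopless attached vertices from the outset (justified by the ``two self-loops'' hypothesis) whereas the paper also verifies directly that looped additions force rank at least $4$.
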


\begin{proof}
	By Remark~\ref{rem1}, it suffices to consider only the graph $(C_4)_S$ with $\sigma=2$ and $S$ is an independent set in $C_4$. 
	The approach is similar to that of Theorem~\ref{thm1}, there are three adjacency cases (i), (ii), and (iii). Suppose $W$ is an independent set of isolated vertices possibly with loops. Then, up to isomorphisms,
	
	\begin{enumerate}[(i)]
		\item There are two sub-cases, each corresponds to the graphs below, respectively. 
		\begin{figure}[H]
			\centering
			\vspace{-1em}
			\begin{minipage}[b]{0.4\textwidth}
				\centering    
				\includegraphics[width=0.55\textwidth]{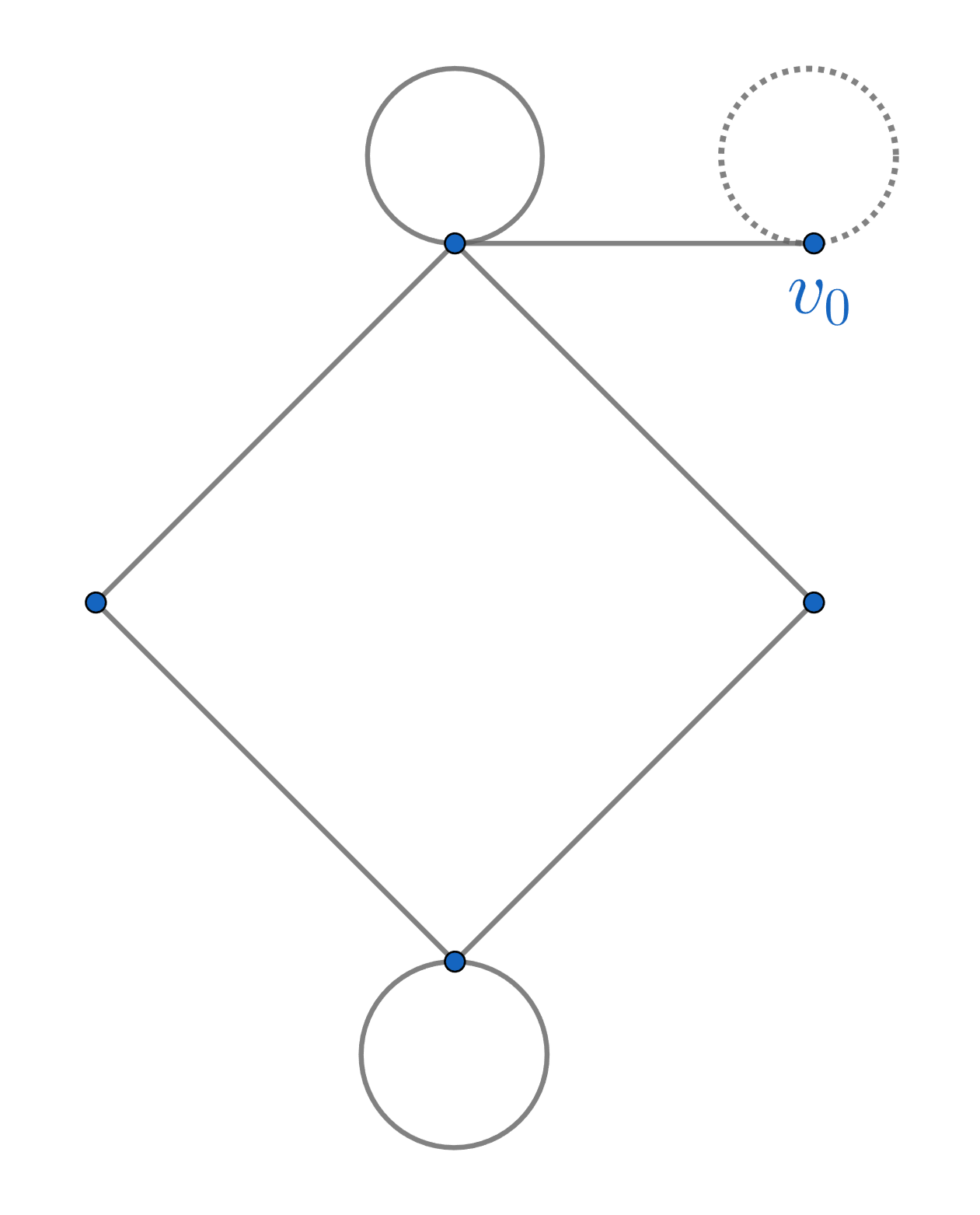}
				\caption{}
			\end{minipage}
			\begin{minipage}[b]{0.4\textwidth}
				\centering    
				\includegraphics[width=0.80\textwidth]{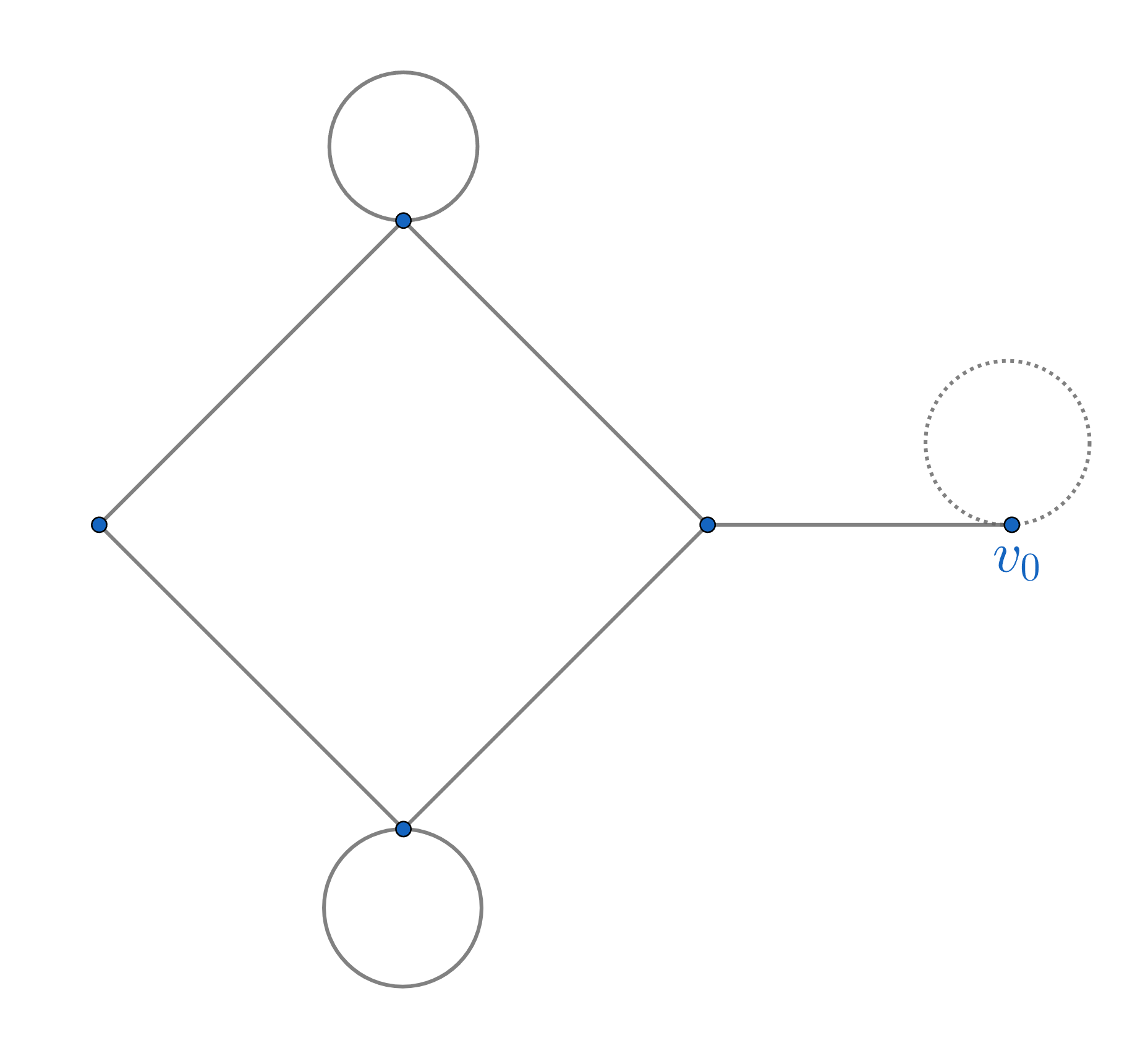}
				\caption{}
			\end{minipage}
		\end{figure}		
		\noindent However, regardless of whether $v_0$ has a loop or not, the left is of rank 4 and the right is of rank 5. 
		\item In this case, the corresponding graph is Fig.~\ref{fig:20}, which has rank 4, regardless of whether $v_0$ has a loop or not.
		\item The last case corresponds to the graph in Fig.~\ref{fig:21}, which has rank 3 if $v_0$ has no loop and rank 4 otherwise.
	\end{enumerate}
	
	\begin{figure}[H]
		\centering
		\begin{minipage}[b]{0.4\textwidth}
			\centering    
			\includegraphics[width=0.6\textwidth]{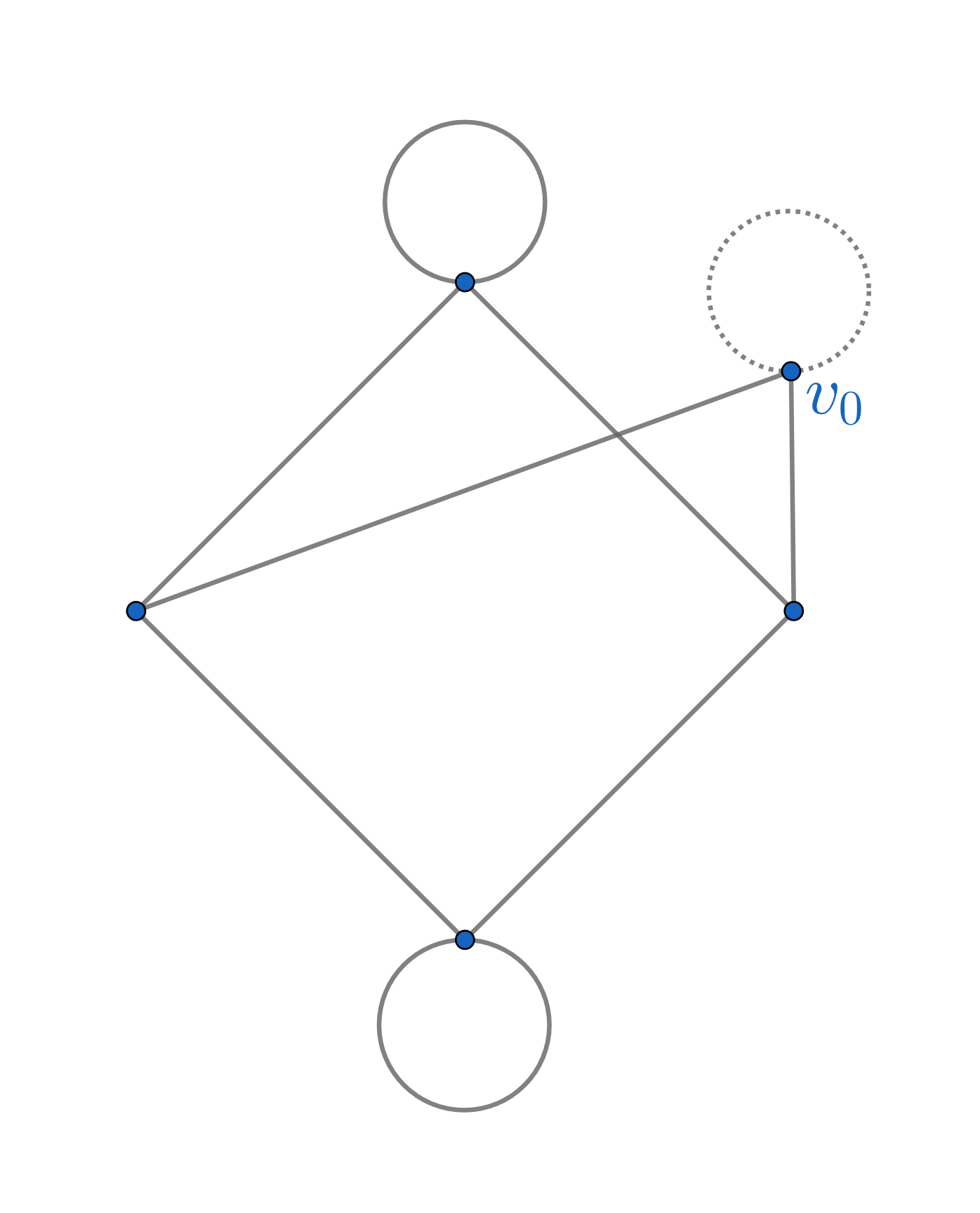}
			\caption{}
			\label{fig:20}
		\end{minipage}
		\begin{minipage}[b]{0.4\textwidth}
			\centering    
			\includegraphics[width=0.6\textwidth]{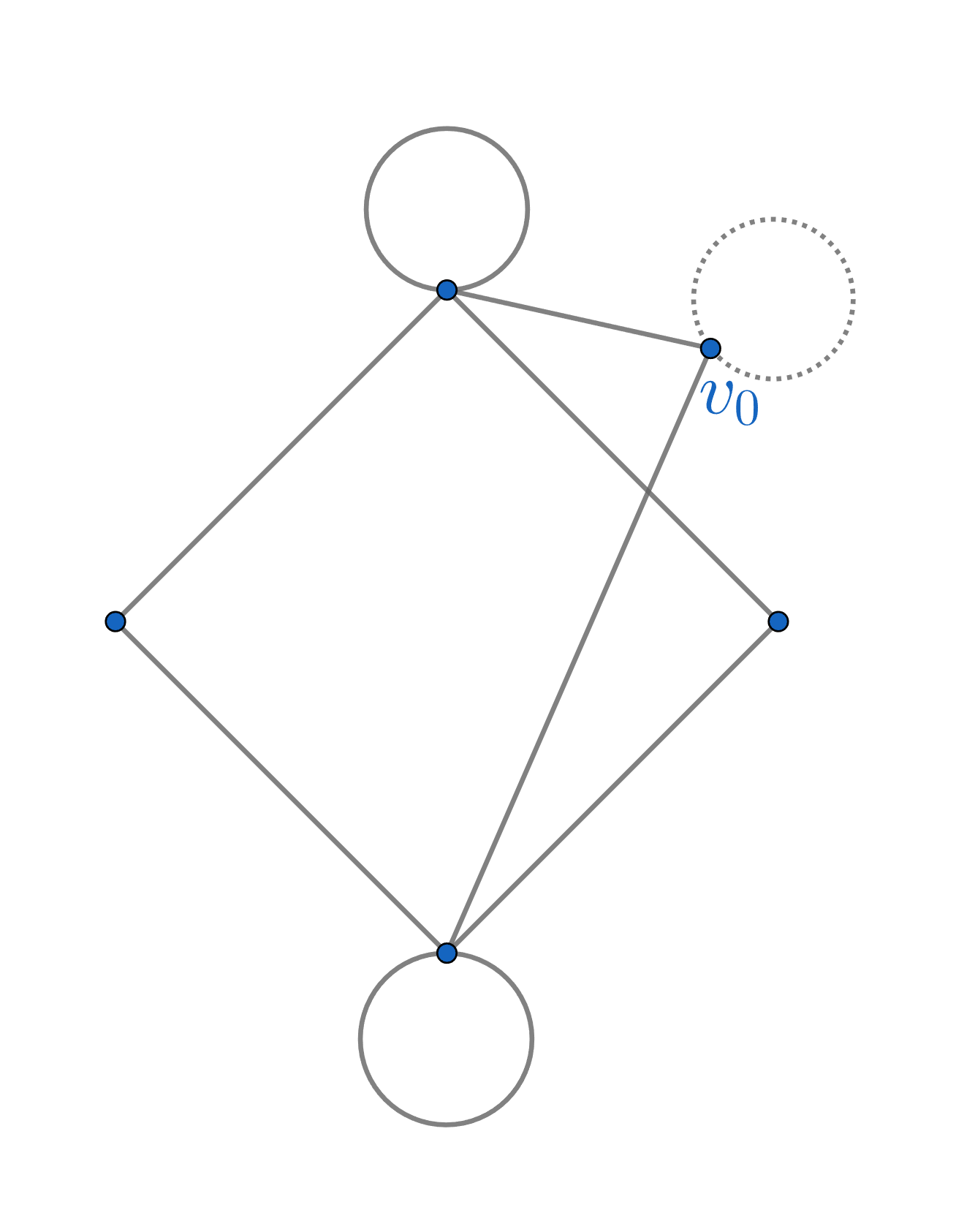}
			\caption{}				
			\label{fig:21}
		\end{minipage}
	\end{figure}
	Thus, only Case (iii) is possible to obtain rank 3 graphs and $W$ must contain only loopless vertices. By induction on graph joins with $W,$ the graph $H_2'$ (Fig.~\ref{fig:H2'}) is of rank 3 and two loops. The maximality of $E(H_2')$ is similar to the proof of Theorem~\ref{thm1}.
\end{proof}
\begin{figure}[h]
	\centering
	\includegraphics[width=0.42\textwidth]{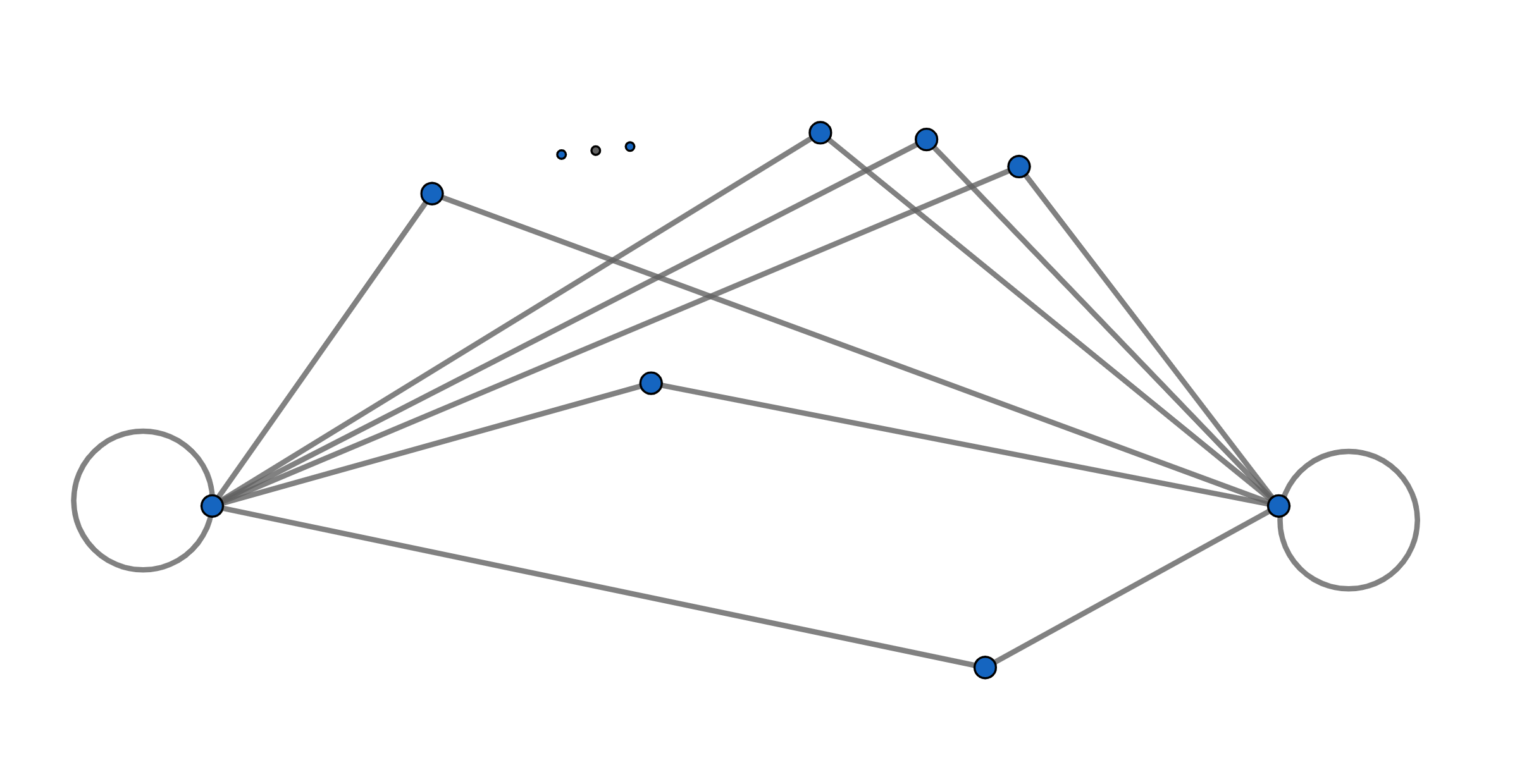}
	\caption{$H_2'$} \qedhere 
	\label{fig:H2'}
\end{figure} 

\begin{remark}
	Observe that the graph in Fig.~\ref{fig:23} (resp. \ref{fig:24}) obtained by adding a loop at $v$ in Fig~\ref{fig:14} (resp. \ref{fig:15}) are both of rank 4. If $W$ is an independent set of isolated vertices possibly with loops, then one verifies that the $n$-graph joins of Fig.~\ref{fig:23} and Fig.~\ref{fig:24} with $W$ cannot be of rank 3 if $W$ contains at least one $\widehat{K_1}.$ However, removing such looped vertex 
	from $W$ in the graph joins may create new classes of triangle-free connected self-loop cyclic graphs of rank 3, as shown in the next theorem.
	\begin{figure}[H]
		\centering
		\hspace{-2em}
		\begin{minipage}[b]{0.33\textwidth} 
			\centering    
			\includegraphics[width=1\textwidth]{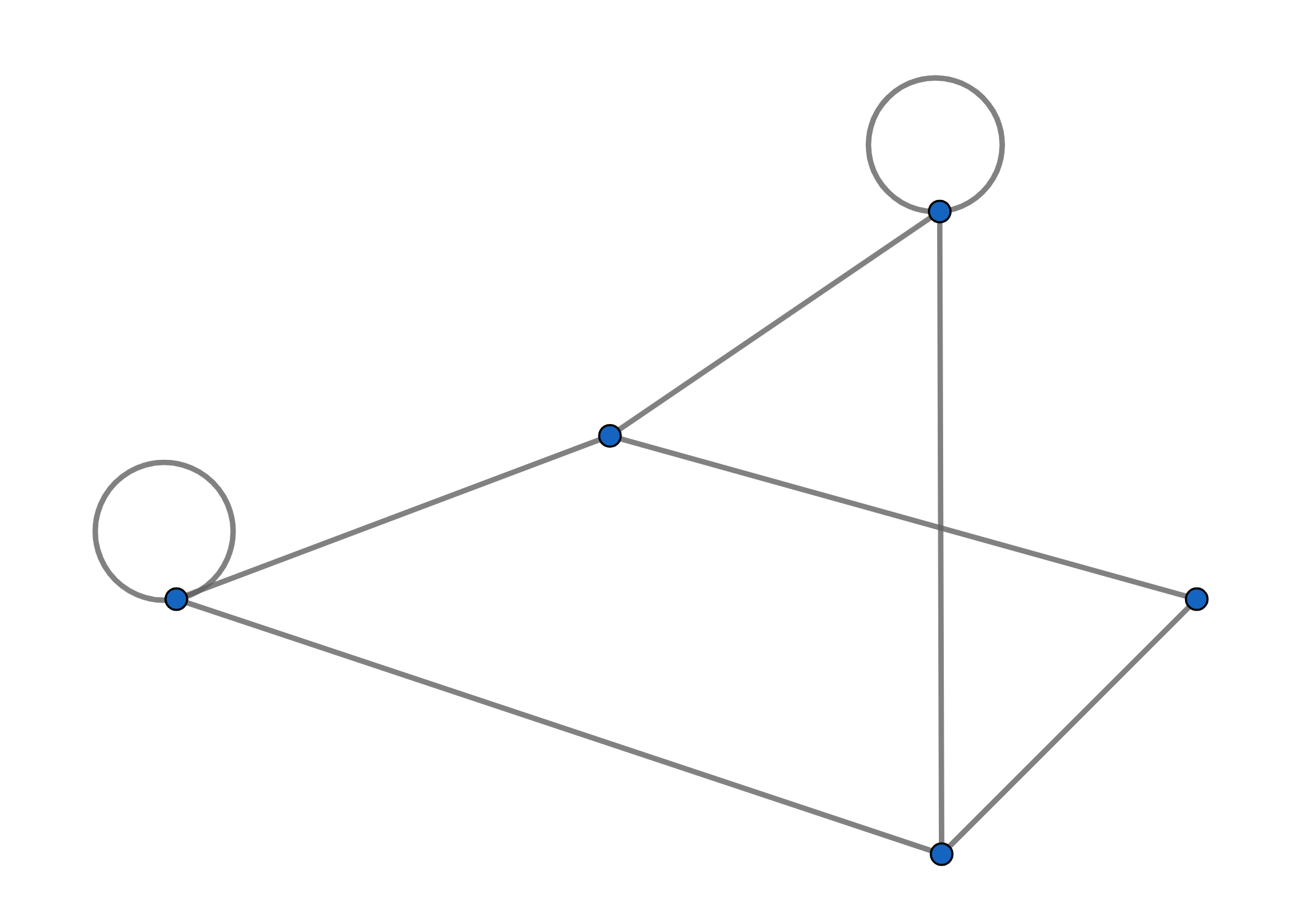}
			\caption{ }
			\label{fig:23}
		\end{minipage}
		\hspace{4em}
		\begin{minipage}[b]{0.33\textwidth} 
			\centering    
			\includegraphics[width=1.1\textwidth]{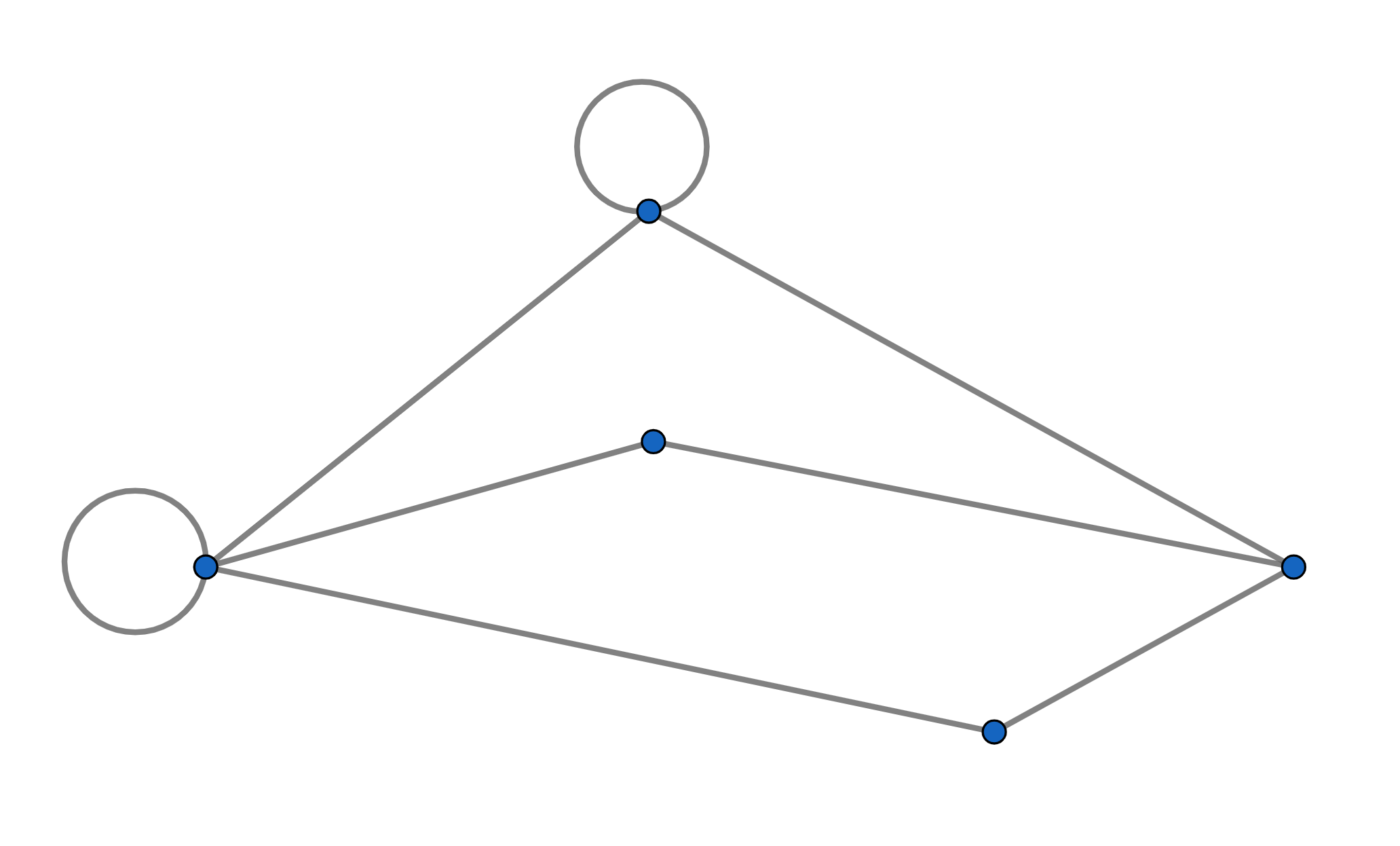}
			\caption{ }
			\label{fig:24}
		\end{minipage}
	\end{figure}
\end{remark}

\begin{theorem}
	Let $F_S$ and $F_S'$ be the two graphs obtained from $G_S=(C_4)_S,$ $|S|=1,$ 
	by adding an one-sided looped pendent edge as in Fig.~\ref{fig:8} and Fig.~\ref{fig:9}, respectively. Then, 
	up to isomorphisms, all rank 3 triangle-free connected cyclic graphs with two self-loops constructed from $F_S$ and $F_S'$ are
	
	\[
	H_3 = F_S \underset{\cV_I}{\vee} W,  \qquad 
	H_4 = F_S \underset{V(G_S) \setminus \cV_I}{\vee} W, \qquad  
	H_5 = F'_S \underset{\cV_I}{\vee} W,
	\]
	where $\cV_{I}$ is the independent set of $V(G_S)\setminus S$ and $W \nsubseteq V(G_S)\cup \{v_0\}$  is an independent set of $K_1'$s.

	\vspace{-1em}    
	\begin{figure}[H]
		\centering
		\hspace{-5em} 
		\begin{minipage}[b]{0.4\textwidth} 
			\centering    
			\includegraphics[width=0.6\textwidth]{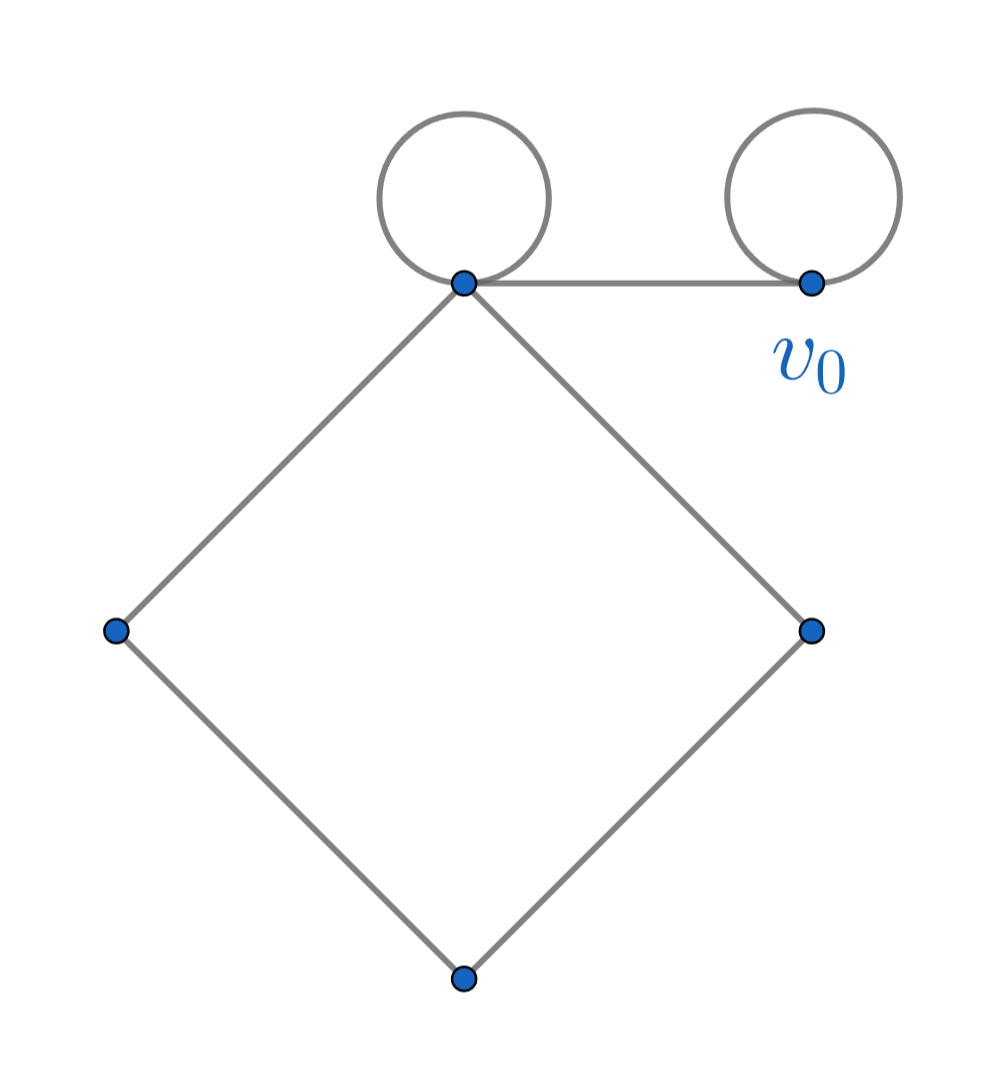}
			\caption{$F_S$}
			\label{fig:8}
		\end{minipage}
		\hspace{1em}
		\begin{minipage}[b]{0.4\textwidth} 
			\centering    
			\includegraphics[width=0.65\textwidth]{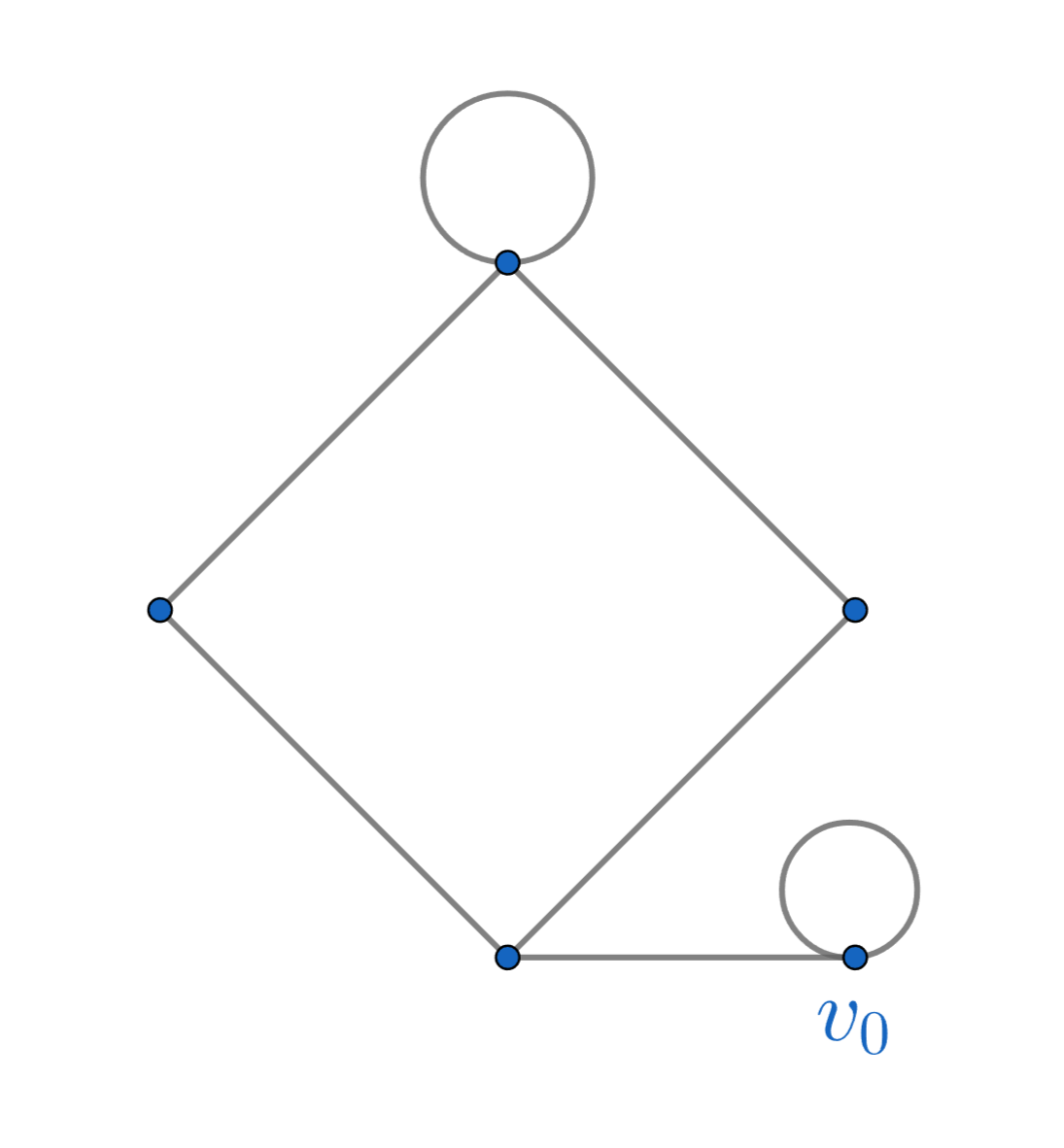}
			\caption{$F'_S$}
			\label{fig:9}
		\end{minipage}
	\end{figure}
\end{theorem}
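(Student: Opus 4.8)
The approach is to imitate the proofs of Theorem~\ref{thm1} and of its $|S|=2$ analogue, now taking the two rank-$3$ graphs $F_S$ and $F'_S$ (Fig.~\ref{fig:8}, Fig.~\ref{fig:9}) as the seeds in place of $(C_4)_S$ itself, and growing them by graph joins with independent sets of loopless $K_1$'s. First I would record, by a direct rank computation on the $5\times 5$ adjacency matrices (or simply by inspecting the two figures), that $\rank(F_S)=\rank(F'_S)=3$; this is what makes them legitimate seeds, since by Corollary~\ref{subgraphrank} any graph containing a seed of rank $\ge 4$ can never have rank $3$. In the same step I would note the analogue of Remark~\ref{rem1}: among the three non-isomorphic ways of appending a looped pendent edge to $(C_4)_S$ with $|S|=1$ (at the looped vertex, at a loopless vertex adjacent to it, or at the loopless vertex non-adjacent to it), only two yield rank $3$, namely $F_S$ and $F'_S$, while the third contains a $(P_5)_{14}$-type subgraph of rank $5$ (cf.\ Fig.~\ref{5P14}) and is discarded.

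Next, exactly as in Theorem~\ref{thm1}, I would argue that any new vertex $v$ adjoined to $F_S$ (or $F'_S$) while keeping rank $3$ must be loopless: otherwise $v$ together with two suitably chosen vertices of the seed spans a rank-$4$ principal submatrix, which is precisely the obstruction displayed in the Remark preceding the theorem via Fig.~\ref{fig:23} and Fig.~\ref{fig:24}. Hence $W$ is an independent set of $K_1$'s, as claimed. Triangle-freeness then forces $v$ to avoid both endpoints of every edge of the seed graph; since each seed is a $4$-cycle carrying a loop together with a looped pendent vertex $v_0$, the only independent vertex subsets available to $v$ are: a single vertex; the independent pair $\cV_I\subseteq V(G_S)$; the complementary pair $V(G_S)\setminus\cV_I$; and pairs or triples involving the pendent $v_0$. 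I would eliminate the spurious options in the style of Theorem~\ref{thm1}, by exhibiting inside each resulting graph one of the rank-$4$ subgraphs $(P_4)_1$, $(P_4)_2$ (cf.\ Fig.~\ref{4P1}, Fig.~\ref{4P2}), $(P_4)_{13}$ (Fig.~\ref{4P13}), or a rank-$5$ subgraph, contradicting Corollary~\ref{subgraphrank}.

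The options that actually preserve rank $3$ are, for $F_S$, the joins over $\cV_I$ and over $V(G_S)\setminus\cV_I$, which give $H_3$ and $H_4$, and for $F'_S$, the join over $\cV_I$ only, which gives $H_5$; the one remaining candidate, the join of $F'_S$ over $V(G_S)\setminus\cV_I$, is ruled out by one further explicit rank computation, and this asymmetry is exactly why there is no fourth family. As in the earlier proofs, an induction on $|W|$ then finishes the argument: adjoining a further loopless vertex with the same neighbourhood merely duplicates a row and a column of the adjacency matrix, so the rank stays $3$ and no graph outside $\{H_3,H_4,H_5\}$ appears. Finally I would verify that $E(H_3),E(H_4),E(H_5)$ are maximal with respect to rank $3$: inserting any absent edge between two vertices of $H_i$ either creates a triangle or completes one of the forbidden rank-$4$ configurations, so the three families are complete.

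I expect the main obstacle to be the case analysis in the third step: making the enumeration of admissible neighbourhoods of a new loopless vertex genuinely exhaustive in the presence of the extra pendent vertex $v_0$ (which has no counterpart in Theorem~\ref{thm1}), and carrying out the rank computations that separate the successful joins from the failures — in particular the computation showing that $F'_S$ joined over $V(G_S)\setminus\cV_I$ drops below full rank~$3$ expectations, which is the only point where the proof is not a routine transcription of the $|S|=1$ and $|S|=2$ cases. The induction and the maximality verification, by contrast, follow the established template essentially verbatim.
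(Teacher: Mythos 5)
Your proposal is correct and follows essentially the same route as the paper's proof: verify that $F_S$ and $F'_S$ are the only rank-3 seeds obtainable by attaching a looped pendent vertex, force all further added vertices to be loopless via the rank-4/triangle obstructions of Fig.~\ref{fig:18}--\ref{fig:19}, enumerate the admissible independent neighbourhoods, and close with the graph-join induction and maximality check. Your write-up is in fact somewhat more explicit than the paper's (notably in ruling out the fourth candidate $F'_S \underset{V(G_S)\setminus\cV_I}{\vee} W$ by a direct rank computation, which the paper leaves implicit), but the underlying argument is the same.
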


\begin{proof}
	Let $\cV_{I}=\{v_a,v_b\}$ be the independent set in $V(G_S) \setminus S.$ If $v_0=\widehat{K_1}$ is adjacent to all vertices in $V(G_S) \setminus \cV_I,$ then the resulting graph is of rank 4. Thus, $v_0$ is only adjacent to either vertex in $V(G_S) \setminus \cV_I.$ 
	This produces $F_S$ and $F_S'$ of rank 3. Observe that adding a distinct vertex (with or without a loop) to $F_S$ with one edge will produce a graph that contains a subgraph of rank at least 4. As such, the only possible rank 3 graphs of desired are as follows in Fig.~\ref{fig:18} and Fig.~\ref{fig:19}
	\begin{figure}[H]
		\centering
		\begin{minipage}[b]{0.4\textwidth} 
			\centering    
			\includegraphics[width=0.8\textwidth]{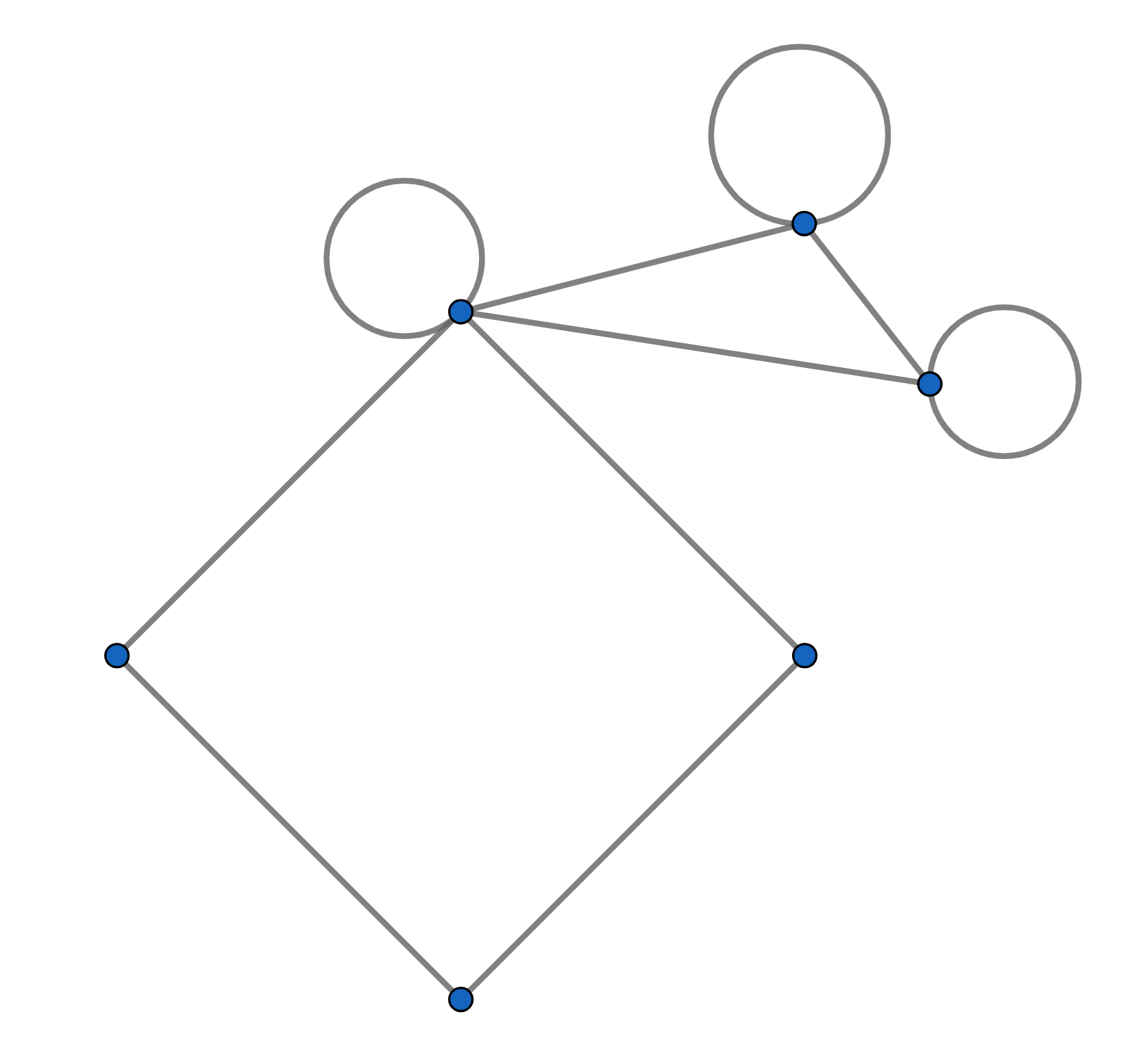}
			\caption{ }
			\label{fig:18}
		\end{minipage}
		\hspace{1em}
		\begin{minipage}[b]{0.4\textwidth} 
			\centering    
			\includegraphics[width=0.8\textwidth]{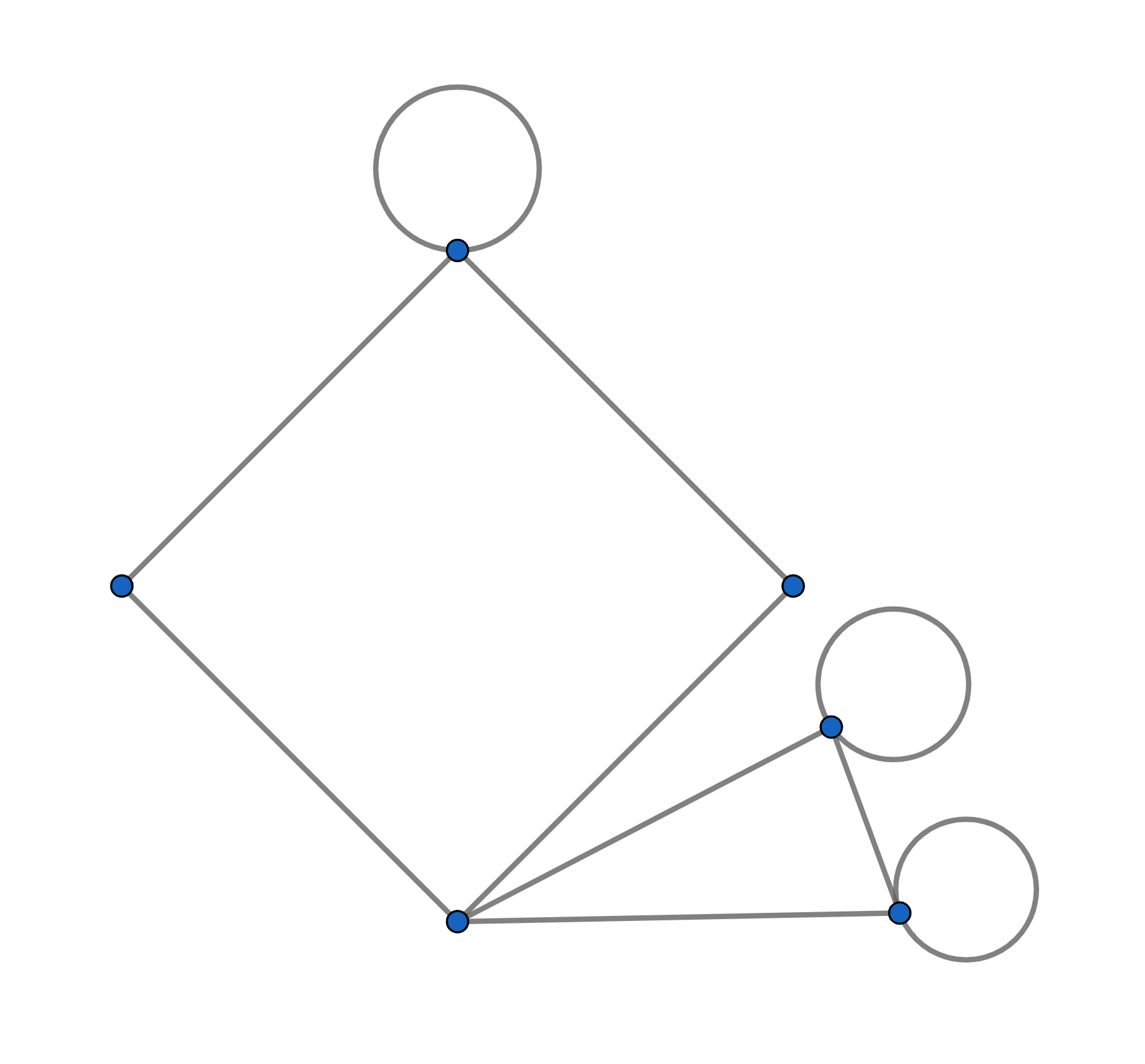}
			\caption{ }
			\label{fig:19}
		\end{minipage}
	\end{figure}
	\noindent which, however, clearly contain a triangle. Thus, the added vertices must be loopless. By Corollary~\ref{subgraphrank} and applying graph joins, the only possible desired graphs are $H_3, H_4,$ and $H_5,$ respectively:
	\begin{figure}[H]
		\centering
		\hspace{-2em}
		\begin{minipage}[b]{0.35\textwidth} 
			\centering    
			\includegraphics[width=0.85\textwidth]{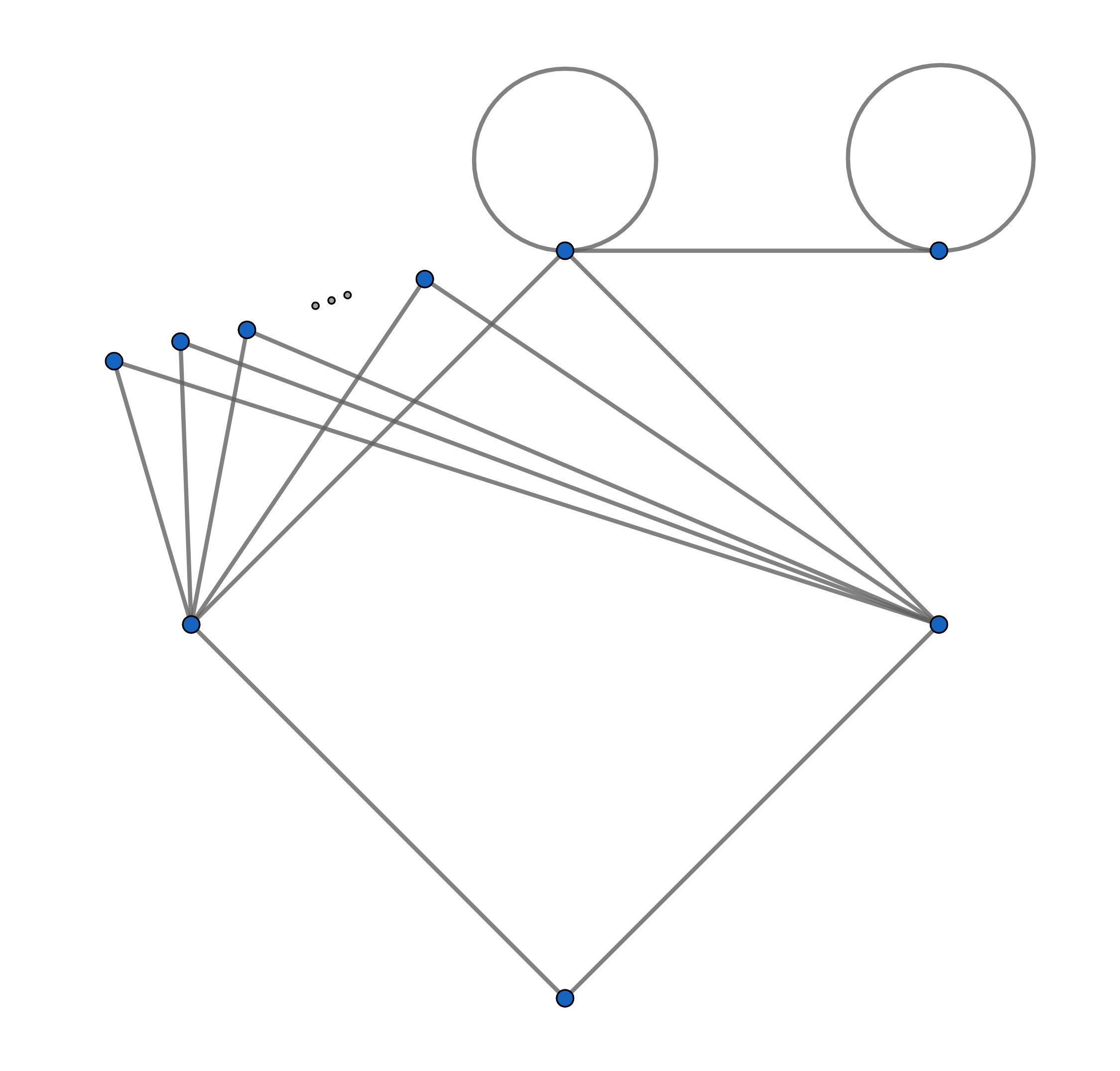}
			\caption{$H_3$}
			\label{fig:H3}
		\end{minipage}
		\hspace{-2em}
		\begin{minipage}[b]{0.35\textwidth} 
			\centering    
			\includegraphics[width=0.9\textwidth]{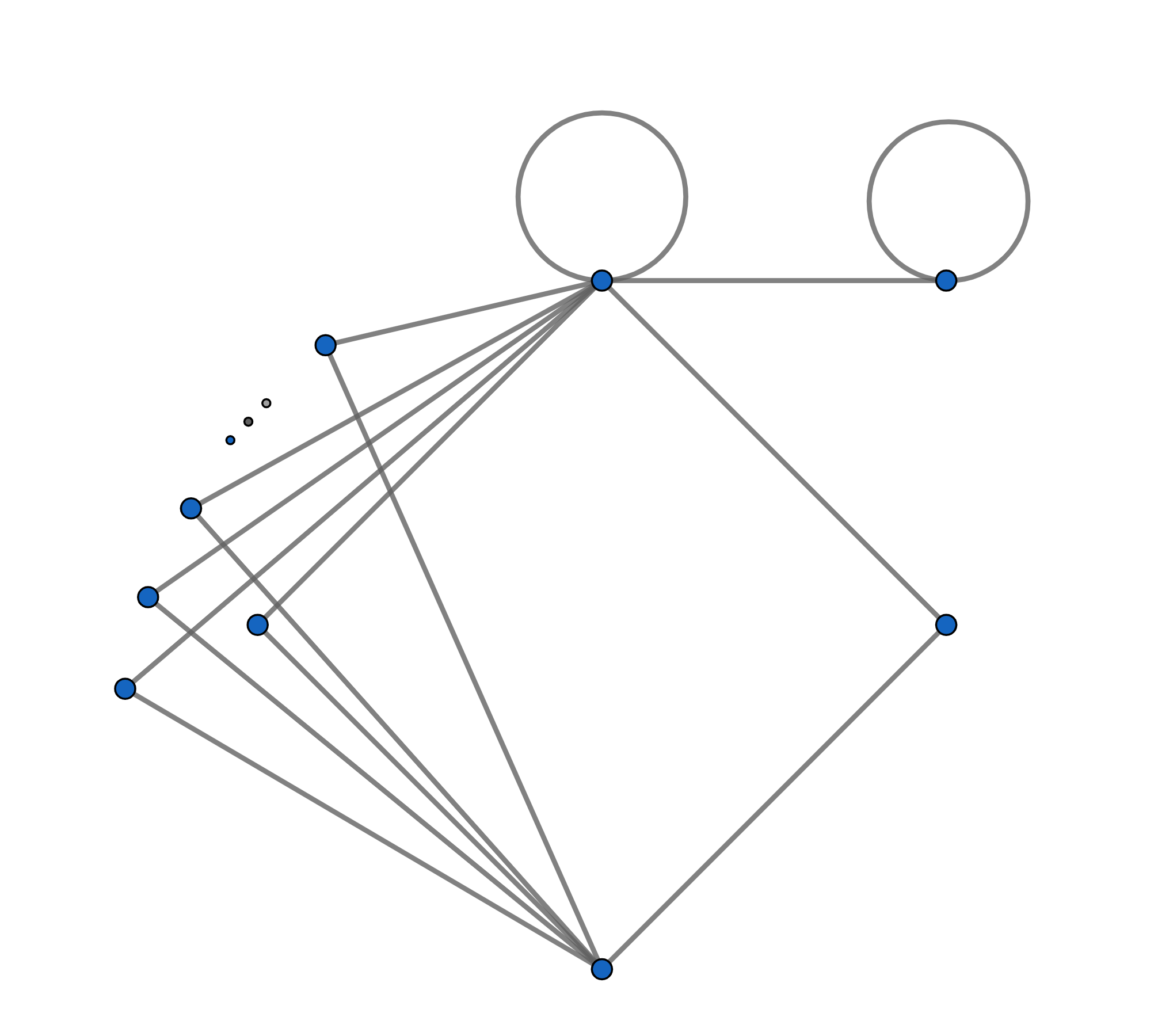}
			\caption{$H_4$}
			\label{fig:H4}
		\end{minipage}
		\hspace{-2em}
		\begin{minipage}[b]{0.35\textwidth} 
			\centering    
			\includegraphics[width=0.9\textwidth]{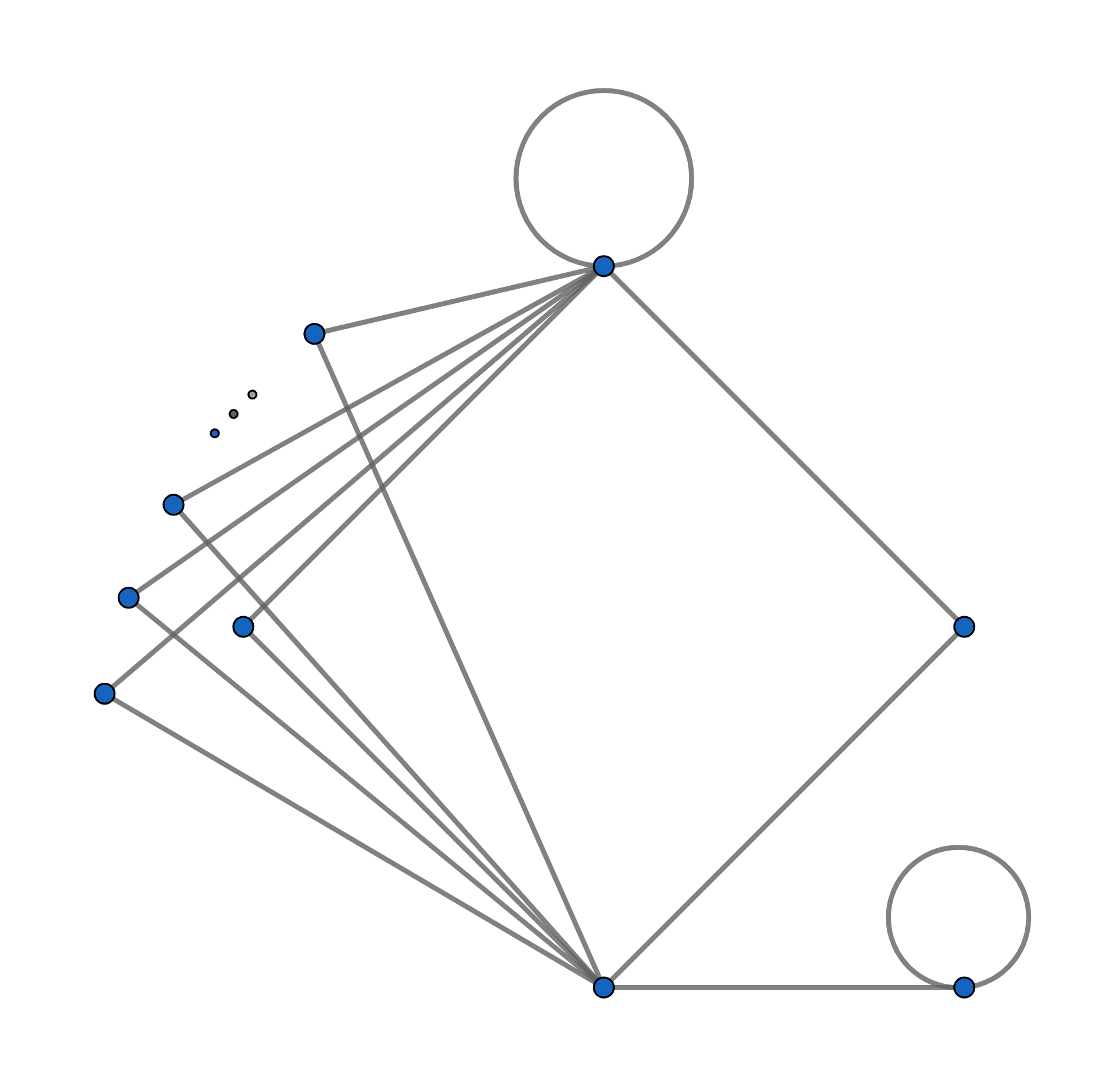}
			\caption{$H_5$}
			\label{fig:H5}
		\end{minipage} \qedhere
	\end{figure}
\end{proof}

\begin{remark}
	Let $H_3', H_4',$ and $H_5'$ be the corresponding graphs with $V(G_S) \setminus \cV_I \subseteq S.$ Then, all these graphs have rank 4:
	\begin{figure}[H]
		\centering
		\begin{minipage}[b]{0.35\textwidth} 
			\centering    
			\vspace{-2em}   
			\includegraphics[width=0.85\textwidth]{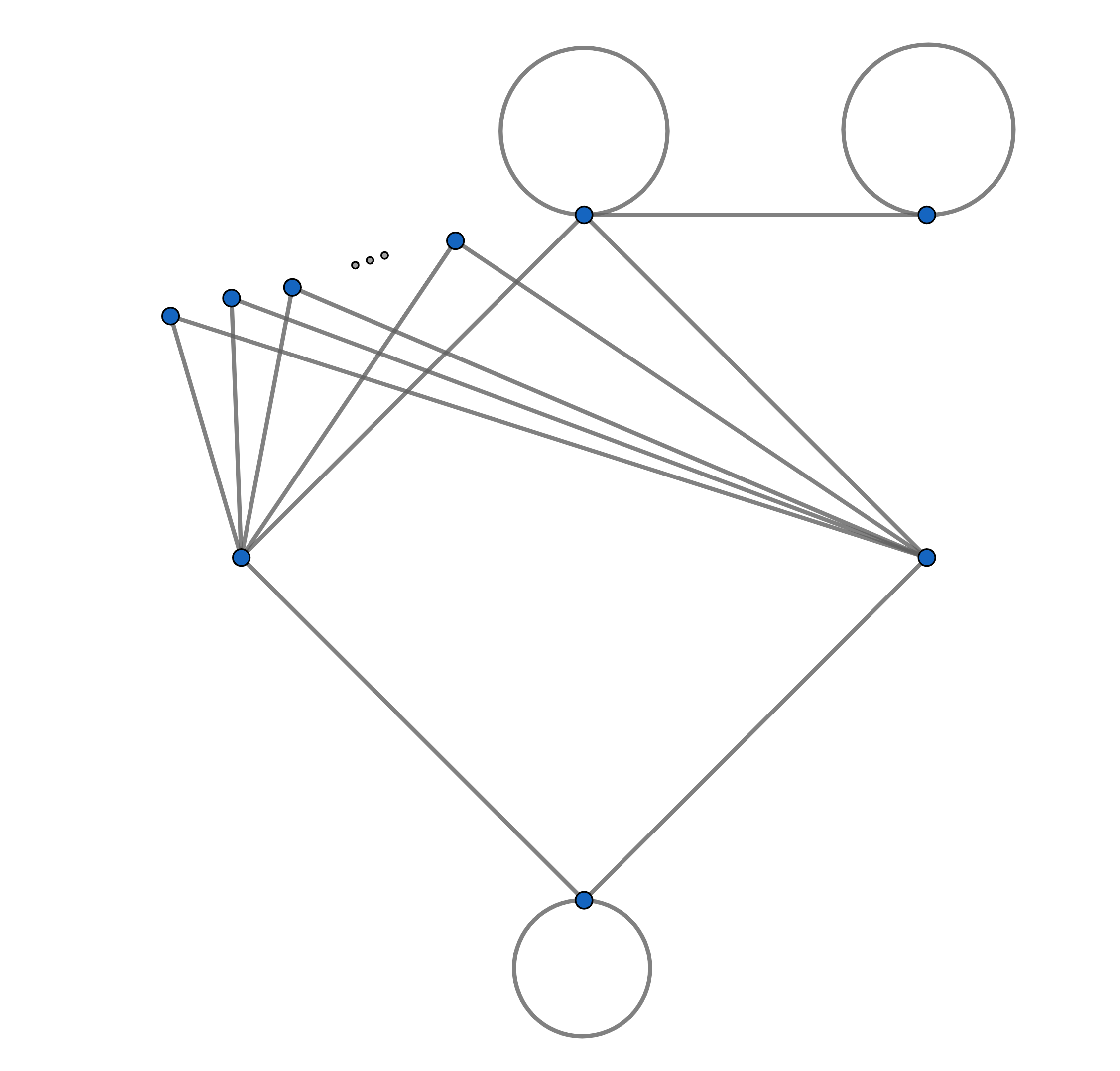}
			\caption{$H_3'$}
			\label{fig:32}
		\end{minipage}
		\hspace{-2em}
		\begin{minipage}[b]{0.35\textwidth} 
			\centering    
			\includegraphics[width=0.9\textwidth]{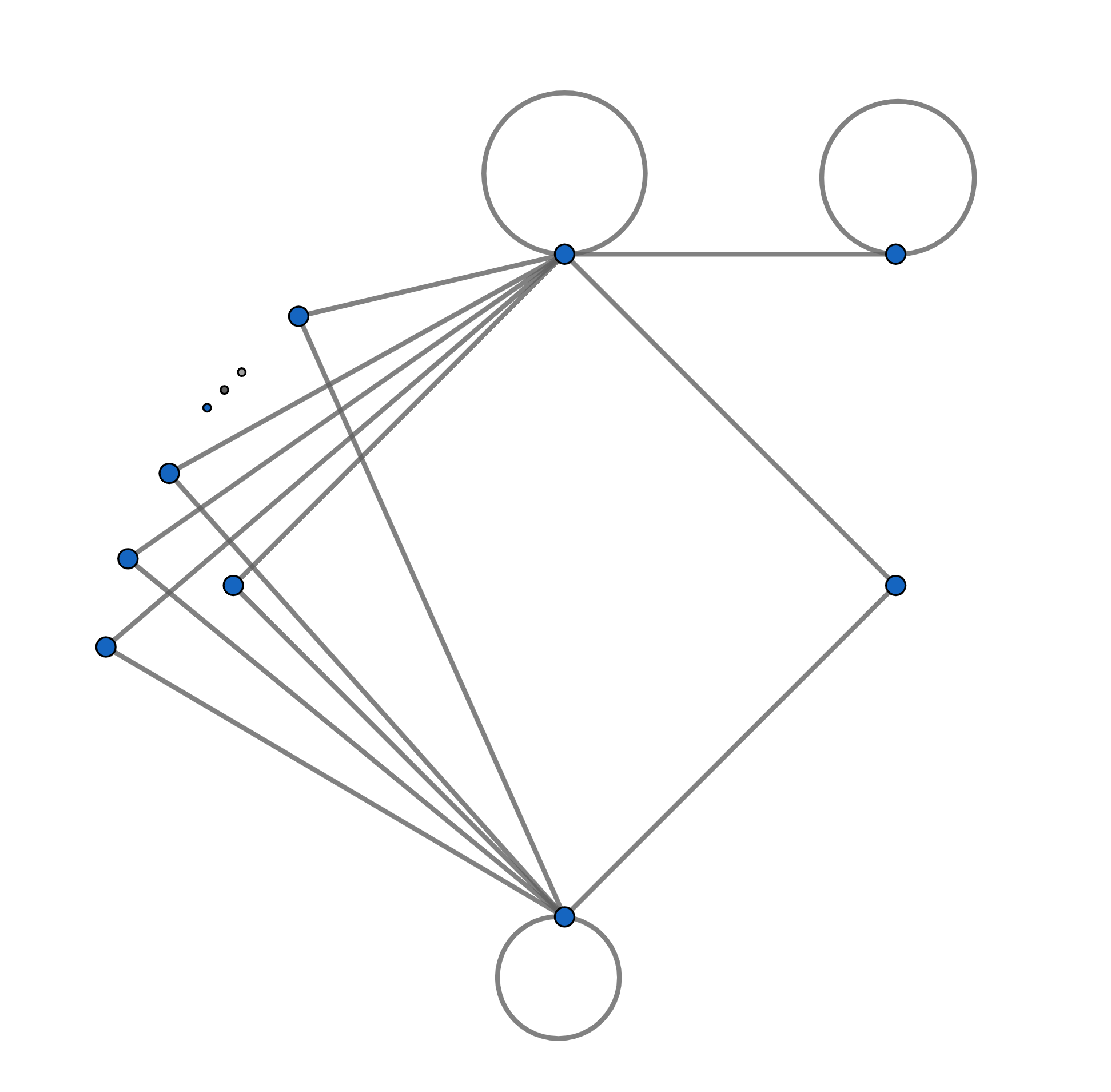}
			\caption{$H_4'$}
			\label{fig:33}
		\end{minipage}
		\hspace{-2em}
		\begin{minipage}[b]{0.35\textwidth} 
			\centering    
			\includegraphics[width=0.9\textwidth]{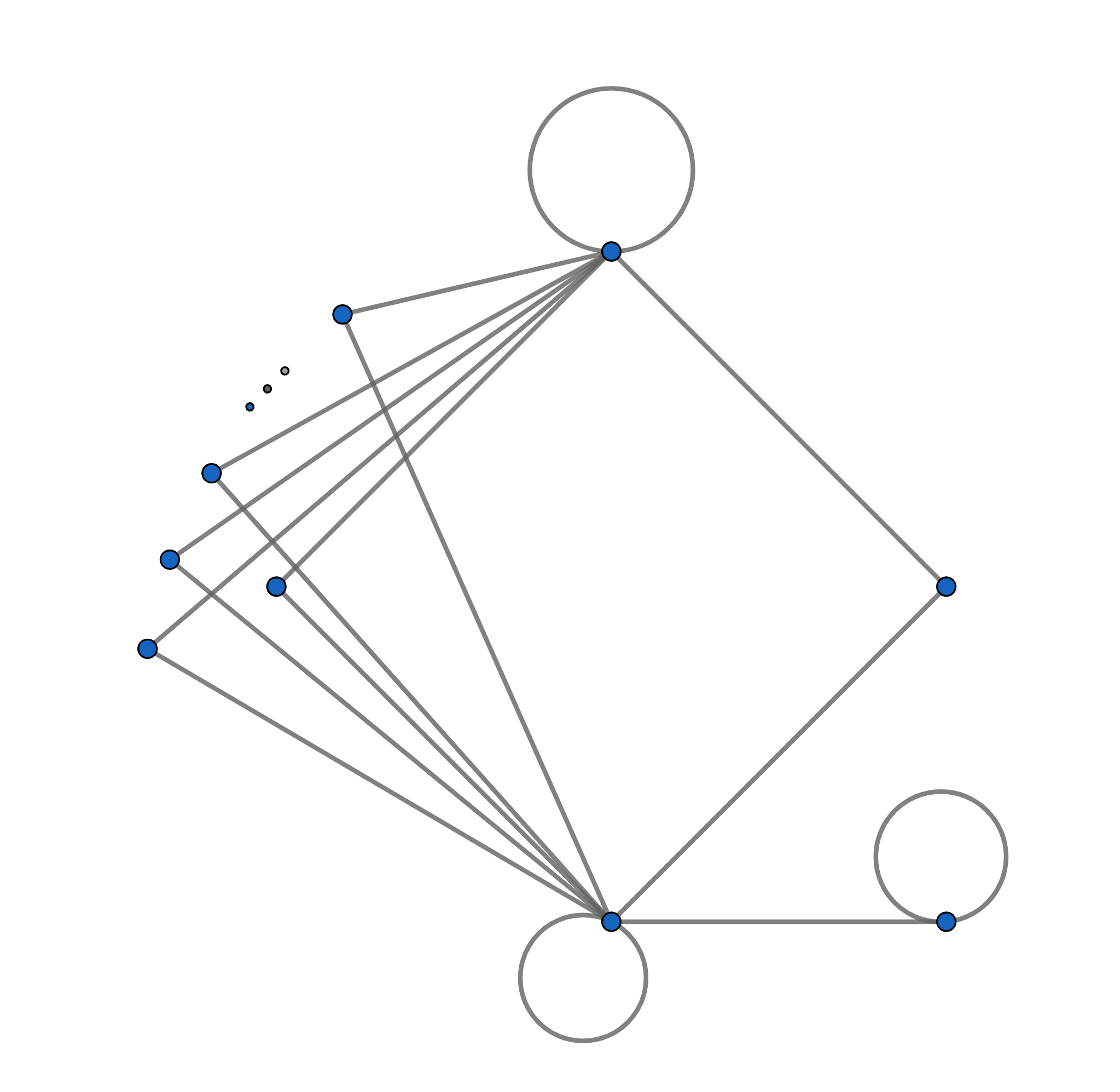}
			\caption{$H_5'$}
			\label{fig:34}
		\end{minipage}
		\vspace{-1em}
	\end{figure}
	\noindent where, of course, $H_4'$ is isomorphic to $H_5'.$
\end{remark}

\section{Conclusion}
\label{conclusion}

We have shown that if a triangle-free connected self-loop cyclic graph is not isomorphic to either of the six graphs in Fig.~\ref{fig:H1}, Fig.~\ref{fig:H2}, Fig.~\ref{fig:H2'}, Fig.~\ref{fig:H3}, Fig.~\ref{fig:H4}, and Fig.~\ref{fig:H5}, then such graph cannot be of rank 3. The necessary condition of the characterization is then obtained by the contrapositive argument. A sufficient condition is straightforward by checking the rank of their adjacency matrix: if a triangle-free connected self-loop cyclic graph is isomorphic to either of the six graphs aforementioned, then it is of rank 3. Thus, we have provided the desired characterization and a partial solution to the complete characterizations of rank 3 connected self-loop graphs.

\section{Open problem}
\label{openproblem}

\begin{enumerate}[(i)]
	\item It is evident from Fig.~\ref{fig:18} and Fig.~\ref{fig:19} there are more rank 3 connected self-loop graphs of order at least 4 that \textit{contain a triangle}. Thus, it would be interesting to characterize all such graphs.
	\item One can also investigate the relationship between the rank and energy of self-loop graphs. 
\end{enumerate}

\vspace{0.5cm}
\subsection*{Acknowledgement}
The author would like to thank the anonymous referees, Miin Huey Ang, and Kevin Fung for their helpful comments and suggestions in improving the manuscript.


\bibliography{bibliography}{}

\providecommand{\bysame}{\leavevmode\hbox to3em{\hrulefill}\thinspace}
\providecommand{\MR}{\relax\ifhmode\unskip\space\fi MR }
\providecommand{\MRhref}[2]{%
  \href{http://www.ams.org/mathscinet-getitem?mr=#1}{#2}
}
\providecommand{\href}[2]{#2}
\begin{thebibliography}{1}

\bibitem{rank12char2025}
N.~A. Abd~Aziz, S.~Akbari, M.~H. Ang, X.~Y. Li, and J.~Lim, \emph{The
  {C}haracterizations of {R}ank {O}ne and {R}ank {T}wo {S}elf-{L}oop {G}raphs},
  To appear in AIP Conference Proceedings (2025).

\bibitem{akbari2023selfloop}
S.~Akbari, H.~Al~Menderj, M.~H. Ang, J.~Lim, and Z.~C. Ng, \emph{Some results
  on spectrum and energy of graphs with loops}, Bull. Malays. Math. Sci. Soc.
  \textbf{46} (2023), no.~3, Paper No. 94, 18. \MR{4567384}

\bibitem{chang2011}
G.~J. Chang, L-H. Huang, and H-G. Yeh, \emph{A characterization of graphs with
  rank 4}, Linear Algebra Appl. \textbf{434} (2011), no.~8, 1793--1798.
  \MR{2775770}

\bibitem{chang2012}
\bysame, \emph{A characterization of graphs with rank 5}, Linear Algebra Appl.
  \textbf{436} (2012), no.~11, 4241--4250.

\bibitem{cvetkovic2010intro}
D.~M. Cvetkovi\'{c}, P.~Rowlinson, and S.~Simi\'{c}, \emph{An introduction to
  the theory of graph spectra}, London Mathematical Society Student Texts,
  vol.~75, Cambridge University Press, Cambridge, 2010. \MR{2571608}

\bibitem{gutman1990topological}
I.~Gutman, \emph{Topological studies on heteroconjugated molecules. {VI}.
  {A}lternant systems with two heteroatoms}, Zeitschrift f{\"u}r Naturforschung
  A \textbf{45} (1990), no.~9--10, 1085--1089.

\bibitem{gutman2021energy}
I.~Gutman, I.~Red{\v{z}}epovi{\'c}, B.~Furtula, and A.~Sahal, \emph{Energy of
  {G}raphs with {S}elf-{L}oops}, MATCH Commun. Math. Comput. Chem. \textbf{87}
  (2021), 645--652.

\bibitem{HornJohnson2013}
R.~A. Horn and C.~R. Johnson, \emph{Matrix analysis}, second ed., Cambridge
  University Press, Cambridge, 2013. \MR{2978290}

\bibitem{sciriha1999}
I.~Sciriha, \emph{On the rank of graphs}, Combinatorics, graph theory, and
  algorithms, {V}ol. {I}, {II} ({K}alamazoo, {MI}, 1996), New Issues Press,
  Kalamazoo, MI, 1999, pp.~769--778. \MR{1985108}

\end{thebibliography}
\bibliographystyle{amsplain}



\end{document}